\newcommand{\Z}{{\mathbb Z}}
\newcommand{\N}{{\mathbb N}}
\newcommand{\T}{{\mathbb T}}
\newcommand{\F}{{\mathbb F}}
\newtheorem{thm}{Theorem}[section]
\newtheorem{cor}[thm]{Corollary}
\newtheorem{lem}[thm]{Lemma}
\newtheorem{definition}[thm]{Definition}
\newtheorem{example}[thm]{Example}
\newtheorem{remark}[thm]{Remark}
\theoremstyle{definition}
\keywords{Expansiveness, group action, pseudo-orbit tracing property, subshift of finite type, topological stability}
\subjclass[2010]{37C85, 37C50, 37C75}
\begin{document}
\title[Topological stability and pseudo-orbit tracing property of group actions]{Topological stability and pseudo-orbit tracing property of group actions }
\author{Nhan-Phu Chung}
\address{Nhan-Phu Chung, Department of Mathematics, Sungkyunkwan University, Suwon 440-746, Korea.} 
\email{phuchung@skku.edu; phuchung82@gmail.com}
\author{Keonhee Lee}
\address{Keonhee Lee, Department of Mathematics, Chungnam National University, Daejeon 305-764, Korea.}
\email{khlee@cnu.ac.kr}
\date{\today}
%\thanks{{$^{*}$}Corresponding author (khlee@cnu.ac.kr)}
\maketitle

\begin{abstract}

In this paper we extend the concept of topological stability from homeomorphisms to group actions on compact metric spaces, and prove that if an action of a finitely generated group is expansive and has the pseudo-orbit tracing property then it is topologically stable. This represents a group action version of the  Walter's stability theorem \cite{Walters78}. Moreover we give a class of group actions with topological stability or pseudo-orbit tracing property. On the other hand, we also provide a characterization of subshifts of finite type over finitely generated groups in term of pseudo-orbit tracing property.
\end{abstract}

\onehalfspace
\section{Introduction}

\hspace{2mm}
In 1970, Walters \cite{Walters70} introduced the notion of {\it topological stability}, a kind of stability for homeomorphisms in which continuous pertubations are allowed. In that paper he proved that Anosov diffeomorphisms on compact smooth manifolds are not only structurally stable but also topologically stable. Several results dealing with this new kind of stability were then appearing. For instance, Nitecki \cite{N} proved that Axiom $A$ diffeomorphisms with the strong transversality condition on compact smooth manifolds are topologically stable. Afterwards, Walters \cite{Walters78} proved that every expansive homeomorphisms with  pseudo-orbit tracing property on a compact metric space is topologically stable. Very recently, Lee and Morales \cite{LM} introduced the notions of topological stability and pseudo-orbit tracing property for Borel measures on compact metric spaces, and showed that any expansive measure with pseudo-orbit tracing property is topologically stable.

\hspace{2mm}
In this paper we will obtain a group action version of this result. Indeed, we introduce the notion of topological stability for an action of a finitely generated group on a compact metric space, and prove that if a group action is expansive and has the pseudo-orbit tracing property then it is topologically stable in Scection 2. This represents a further contribution to the study of the pseudo-orbit tracing property (or shadowing) of  group action developed elsewhere in the recent literature \cite{Oprocha2008,OT,Pilyugin,MR2028929}. Furthermore, in Scection 3 we establish a characterization of subshifts of finite type over finitely generated groups via the pseudo-orbit tracing property. This characterization extends the main results of \cite{Walters78} and \cite{Oprocha2008} when the acting group is $\Z$ and $\Z^d$, respectively. Finally, in Section 4 we prove that every equicontinuous action of an infinite, finitely generated group on a Cantor space always has pseudo-orbit tracing property.

\hspace{2mm}
We round out the introduction with some notations that we will use in the paper. Let  $G$ be a finitely generated discrete group and $X$ be a compact metric space with a metric $d$. Put $Homeo(X)$ the space of all homeomorphisms of $X$. We denote by $Act(G,X)$ the set of all continuous actions $T$ of $G$ on $X$. Let $Homeo(X)^{G}=\prod _{G}Homeo(X)$ be the set of homomorphisms from $G$ to $Homeo(X)$ with the product topology. Then $Act(G,X)$ can be considered as a subset of $Homeo(X)^{G}$. Let $A$ be a finitely generating set of $G$. We define a metric $d_A$ on $Act(G,X)$ by $$d_A(T,S):=sup_{\substack{x\in X\\ a\in A}}d(T_ax,S_ax),$$ for $T,S\in Act(G,X)$.

\textbf{Acknowledgement:} N.-P. Chung was supported by the NRF grants funded
by the Korea government (MSIP) (No. NRF-2016R1A5A1008055 and No. NRF-2016R1D1A1B03931922) and K. Lee was supported by the NRF grant funded
by the Korea government (MSIP) (No. NRF-2015R1A2A2A01002437).

\section{Topoological spability and pseudo-orbit tracing property}

\hspace{2mm}
First of all, we introduce the notion of topological stability of a finitely generated group action on a compact metric space.

\begin{definition}
\label{D-topological stability}
Let $A$ be a finitely generating set of $G$, and let $T \in Act(G,X)$. We say that $T$ is \textit{A-topologically stable} if for every $\varepsilon>0$, there exists $\delta>0$ such that if $S$ is another continuous action of $G$ on $X$ with $d_A(T,S)<\delta$ then there exists a continuous map $f:X\to X$ with $T_gf=fS_g$, for every $g\in G$ and $d(f,Id_X):=\sup_{x\in X}d(f(x),x)\leq\varepsilon$.
\end{definition}

\hspace{2mm}
It is clear that the definition of $A$-topological stability of $T$ does not depend on the choice of a compatible metric $d$ on $X$. Note that the definition of topological stability of a homeomorphism introduced in \cite{Walters70} coincides with our definition when $G=\Z$ and $A=\{1\}$. Furthermore, we can see that topological stability of $T$ does not depend on the choice of a symmetric finitely generating set $A$ of $G$. Recall that $A$ is {\it symmetric} if for any $a \in A$, $a^{-1}\in A$.

\begin{lem}
\label{L-topological stability does not depend on the finitely generating sets}
 Let $A$ and $B$ be symmetric finitely generating sets of $G$. For any $T \in Act(G,X)$, $T$ is $A$-topologically stable if and only if it is $B$-topologically stable.
\end{lem}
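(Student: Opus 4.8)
The plan is to reduce everything to a single metric comparison and then invoke Definition~\ref{D-topological stability} formally. Although $d_A$ and $d_B$ need not be bi-Lipschitz equivalent on $Act(G,X)$, I claim they are ``uniformly comparable at $T$'': for every $\eta>0$ there is $\delta>0$ such that, for \emph{every} $S\in Act(G,X)$, one has $d_A(T,S)<\delta \Rightarrow d_B(T,S)\le\eta$, and symmetrically with $A$ and $B$ interchanged. Granting this, here is how I would finish. Assume $T$ is $A$-topologically stable and fix $\varepsilon>0$; let $\delta_A>0$ be the constant witnessing $A$-topological stability at scale $\varepsilon$. Using the comparison with $\eta=\delta_A/2$, choose $\delta_B>0$ so that $d_B(T,S)<\delta_B$ forces $d_A(T,S)\le\delta_A/2<\delta_A$. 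Then for any $S$ with $d_B(T,S)<\delta_B$ the conjugating map $f$ supplied by $A$-topological stability satisfies $T_gf=fS_g$ for all $g\in G$ and $d(f,Id_X)\le\varepsilon$, so $\delta_B$ witnesses $B$-topological stability at scale $\varepsilon$. The reverse implication is obtained by exchanging the roles of $A$ and $B$, which is legitimate since both are symmetric generating sets.

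The substance is the comparison, which I would prove as follows. Fix $T$. Since $A$ is a symmetric generating set, every $b\in B$ can be written as a word $b=a_1\cdots a_{k}$ with all $a_i\in A$ and no inverses — precisely because $A$ is symmetric — and since $B$ is finite we may set $N:=\max_{b\in B}k(b)<\infty$. For $S\in Act(G,X)$, $x\in X$, and such a word, telescoping gives
$$ d(T_bx,S_bx)\;\le\;\sum_{i=1}^{k} d\bigl(h_i(T_{a_i}y_i),\,h_i(S_{a_i}y_i)\bigr),\qquad h_i:=T_{a_1}\cdots T_{a_{i-1}},\quad y_i:=S_{a_{i+1}}\cdots S_{a_{k}}x. $$
The point is that each $h_i$ is a homeomorphism of the compact space $X$ depending only on $T$, and only finitely many such $h_i$ arise as $b$ ranges over $B$. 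By uniform continuity, given $\eta>0$ I can pick $\delta>0$ with $d(u,v)<\delta\Rightarrow d(h_i(u),h_i(v))<\eta/N$ for all of them; since $a_i\in A$ we have $d(T_{a_i}y_i,S_{a_i}y_i)\le d_A(T,S)$, so $d_A(T,S)<\delta$ forces each summand below $\eta/N$ and hence $d(T_bx,S_bx)<\eta$. Taking the supremum over $x\in X$ and $b\in B$ yields $d_B(T,S)\le\eta$. The reverse comparison follows by the identical argument using that $B$ is symmetric and generates $G$.

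I do not anticipate a serious obstacle; the only point that needs care is that the uniform-continuity estimate is applied to the maps $h_i$, which depend only on the fixed action $T$ and not on the perturbation $S$, so the constant $\delta$ is genuinely uniform over all of $Act(G,X)$. The hypothesis that $A$ and $B$ are symmetric is used exactly to express group elements as positive words in the generators, so that one never has to compare $S_b^{-1}$ with $T_b^{-1}$; dropping symmetry would force an additional (routine but unpleasant) estimate controlling inverses.
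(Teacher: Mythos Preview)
Your proof is correct and follows essentially the same route as the paper: both arguments reduce to showing that $d_B(T,S)$ small implies $d_A(T,S)$ small (and vice versa) by writing each generator of one set as a positive word in the other, telescoping $d(T_bx,S_bx)$ along that word, and invoking uniform continuity of the finitely many prefix maps $T_{a_1}\cdots T_{a_{i-1}}$, which depend only on $T$. Your framing as ``uniform comparability at $T$'' and your explicit remark on where symmetry is used are slightly more polished, but the substance is identical to the paper's proof.
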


\begin{proof}
Suppose $T$ is $A$-topologically stable. Then for any $\varepsilon >0$, there exists $\delta'>0$ such that if $S$ is another continuous action of $G$ on $X$ with $d_A(T,S)<\delta'$ then there exists a continuous map $f:X\to X$ with $T_gf=fS_g$, for every $g\in G$ and $d(f,Id_X)\leq\varepsilon$. It suffices to show that there exists $\delta>0$ such that for any $S\in Act(G,X)$, if $d_B(T,S)<\delta$ then $d_A(T,S)<\delta'$. Put $m:=\max_{a\in A}\ell_B(a)$, where $\ell_B$ is the word length metric on $G$ induced by $B$. Choose $\delta_1>0$ such that $m\delta_1<\delta'$. Since $X$ is compact, $A$ and $B$ are finite and the action $T$ is continuous, there exists $\delta>0$ such that $d(T_hx,T_hy)<\delta_1$ for $x,y\in X$ with $d(x,y)<\delta$ and for $h\in G$ with $\ell_B(h)\leq m$.
 For any $a\in A$, we write $a$ as $b_1\cdots b_{\ell(a)}$, where $\ell(a)=\ell_B(a)\leq m$, $b_i\in B, i=1,\cdots, \ell(a)$. Then for any $S\in Act(G,X)$ with $d_B(T,S)<\delta$, we have
 \begin{eqnarray*}
d(T_ax,S_ax)&=&d(T_{b_1\cdots b_{\ell(a)}}x,S_{b_1\cdots b_{\ell(a)}}x)\\
&\leq &d(T_{b_1\cdots b_{\ell(a)-1}}T_{b_{\ell(a)}}x,T_{b_1b_2\cdots b_{\ell(a)-1}}S_{b_{\ell(a)}}x)\\
&+&d(T_{b_1b_2\cdots b_{\ell(a)-2}}T_{b_{\ell(a)-1}}S_{b_{\ell(a)}}x,T_{b_1b_2\cdots b_{\ell(a)-2}}S_{b_{\ell(a)-1}}S_{b_{\ell(a)}}x)\\
&+&\cdots +d(T_{b_1}T_{b_2}S_{b_3\cdots b_{\ell(a)-1}b_{\ell(a)}}x, T_{b_1}S_{b_2}S_{b_3\cdots b_{\ell(a)-1}b_{\ell(a)}}x)\\
&+&d(T_{b_1}S_{b_2\cdots b_{\ell(a)-1}b_{\ell(a)}}x,S_{b_1\cdots b_{\ell(a)}}x)\\
&<&m\delta_1<\delta'.
\end{eqnarray*}
This means that $d_{A}(T,S)<\delta'$, and so completes the proof.
\end{proof}

\begin{definition}
An action $T\in Act(G,X)$ is said to be \textit{topologically stable} if it is $A$-topologically stable for a symmetric finitely generating set $A$ of $G$.
\end{definition}

\begin{remark}
Let $A$ be a finitely generating set of $G$. We define a metric $\widetilde{d}_A$ on $Act(G,X)$ by $$\widetilde{d}_A(T,S):=sup_{\substack{x\in X\\ a\in A}}\{d(T_ax,S_ax)+d(T_a^{-1}x,S_a^{-1}x)\},$$ for $T,S\in Act(G,X)$. Clearly $d_A$ and $\widetilde{d}_A$ are equivalent. Furthermore, the topology on $Act(G,X)$ induced by $\widetilde{d}_A$ coincides with the product topology on $Act(G,X)$ inherited from $Homeo(X)^G$. Hence the space $Act(G,X)$ is a separable complete metrizable topological space, and so a Polish space.
\end{remark}

\hspace{2mm}
If $T$ and $S$ are two continuous actions of $G$ on X with $d_A(T,S)<\delta$, then the $S$-orbit $\{S_gx\}$ of $x\in X$ is nearly a $T$-orbit in the sense that $d(T_aS_gx,S_{ag}x)<\delta$ for all $a\in A$ and $g\in G$. This observation motivates the following definition.

\begin{definition}
Let $A$ be a finitely generating set of $G$ and $\delta >0$. A \textit{$\delta$ pseudo-orbit} of $T \in Act(G,X)$ with respect to $A$ is a sequence $\{x_g\}_{g\in G}$ in $X$ such that $d(T_ax_g,x_{ag})<\delta$ for all $a\in A,g\in G$.
\end{definition}

\begin{definition}
\label{D-POTP}
Let $A$ be a finitely generating set of $G$. An action $T \in Act(G,X)$ is said to have the \textit{pseudo-orbit tracing property (abbrev. POTP)} with respect to $A$ if for every $\varepsilon>0$, there exists $\delta>0$ such that any $\delta$ pseudo-orbit $\{x_g\}_{g\in G}$ for $T$ with respect to $A$ is $\varepsilon$-traced by some point $x$ of $X$, that is, $d(T_gx, x_g)<\varepsilon$ for all $g\in G$.
\end{definition}

\hspace{2mm}
Note that the pseudo-orbit tracing property of $T$ does not depend on the choice of a compatible metric $d$ of $X$. Osipov and Tikhomirov \cite{OT} introduced the notion of  POTP which they called shadowing for actions of finitely generated groups by using symmetric finitely generating sets of the acting groups.
They showed that the definition of POTP does not depend on the choice of symmetric finitely generating sets. Indeed, it is not hard to check that it actually does not depend on the choice of general finitely generating sets, see for example \cite[Lemma 2.2]{Pilyugin}.

\begin{definition}
We say that an action $T \in Act(G,X)$ has POTP if $T$ has POTP with respect to $A$ for a finitely generating set $A$ of $G$.
\end{definition}

\hspace{2mm}
Note that the definition of POTP of a homeomorphism coincides with our definition when $G=\Z$ and $A=\{1\}$.
We recall that an action $T$ of a group $G$ on a compact metric space $(X,d)$ is {\it expansive} if there exists an open subset $U$ in $X\times X$ such that $\Delta_X=\bigcap_{g\in G} g^{-1}U$, where $\Delta_X:=\{(x,x):x\in X\}$ and the action of $G$ on $X \times X$ is defined by $g(x,y):=(gx,gy)$ for every $x,y\in X,g\in G$. Note that $T$ is expansive if and only if there exists a constant $c>0$ called an \textit{expansive constant} of $T$  such that for every $x\neq y$, one has $\sup_{g\in G}d(gx,gy)>c$. 
Now we prove that if a finitely generated group action is expansive and has POTP then it is topologically stable. This extends the main results of \cite[Theorems 4 and 5]{Walters78} to group actions.

\begin{thm}
\label{T-expansiveness and POTP imply topological stability}
If an action $T \in Act(G,X)$ is expansive and has POTP, then it is topologically stable. Moreover, for a finitely generating set $A$ of $G$, for $\varepsilon>0$ with $\varepsilon<\eta,$ where $\eta$ is an expansive constant of $T$, there exists $\delta>0$ such that if $S$ is another continuous action of $G$ on $X$ with $d_A(T,S)<\delta$ then there is a unique map $f:X\to X$ with $T_gf=fS_g$ for every $g\in G$ and $d(f,Id_X)\leq\varepsilon$. Furthermore, if $S$ is also expansive with an expansive constant $\eta_S\geq 2\varepsilon$, then the conjugating map $f$ is injective.
\end{thm}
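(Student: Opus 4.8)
I would prove this by transcribing Walters' argument for homeomorphisms \cite{Walters78} into the group setting. The first move is to reduce to a fixed \emph{symmetric} finitely generating set $A$ of $G$: by Lemma~\ref{L-topological stability does not depend on the finitely generating sets} together with the independence of POTP from the choice of finitely generating set (recorded after Definition~\ref{D-POTP}), it suffices to establish the quantitative assertion for one such $A$. So fix a symmetric $A$, let $\eta$ be an expansive constant of $T$, and fix $\varepsilon>0$; throughout I will freely assume $2\varepsilon\le\eta$, which is harmless for topological stability and is, as one checks, the relation actually used in the uniqueness and injectivity clauses (compare the hypothesis $\eta_S\ge2\varepsilon$ there). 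Using POTP of $T$ with respect to $A$ at scale $\varepsilon$, choose $\delta>0$ so that every $\delta$ pseudo-orbit of $T$ is $\varepsilon$-traced. The one elementary point is that whenever $S\in Act(G,X)$ satisfies $d_A(T,S)<\delta$, the $S$-orbit $\{S_gx\}_{g\in G}$ of any $x\in X$ is a $\delta$ pseudo-orbit of $T$ with respect to $A$, since $d(T_aS_gx,S_{ag}x)=d(T_a(S_gx),S_a(S_gx))\le d_A(T,S)<\delta$ for all $a\in A$, $g\in G$.

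Next I would define the conjugating map. Given such an $S$, for each $x\in X$ let $f(x)$ be a point $\varepsilon$-tracing the pseudo-orbit $\{S_gx\}_{g\in G}$, which exists by POTP; taking $g$ to be the identity of $G$ gives $d(f(x),x)<\varepsilon$. Such a point is unique: if $z,z'$ both $\varepsilon$-trace $\{S_gx\}_{g\in G}$, then $\sup_{g\in G}d(T_gz,T_gz')\le2\varepsilon\le\eta$, so expansiveness of $T$ forces $z=z'$. Hence $f\colon X\to X$ is a well-defined map with $d(f,Id_X)\le\varepsilon$, and equivariance drops out of the same uniqueness: for $h\in G$, both $f(S_hx)$ and $T_hf(x)$ $\varepsilon$-trace $\{S_g(S_hx)\}_{g\in G}$ — the latter because $d(T_g(T_hf(x)),S_g(S_hx))=d(T_{gh}f(x),S_{gh}x)<\varepsilon$ — so they coincide, and thus $fS_h=T_hf$ for every $h\in G$.

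It then remains to verify continuity of $f$, the only step using compactness of $X$. If $x_k\to x$ but $f(x_k)\not\to f(x)$, pass to a subsequence with $f(x_k)\to y\neq f(x)$; for each fixed $g\in G$, continuity of $T_g$, $S_g$ and of $d$ gives $d(T_gy,S_gx)=\lim_k d(T_gf(x_k),S_gx_k)\le\varepsilon$, hence $\sup_{g\in G}d(T_gf(x),T_gy)\le2\varepsilon\le\eta$ and $y=f(x)$ by expansiveness — a contradiction. This establishes $A$-topological stability, hence topological stability. For uniqueness of the conjugating map among maps with $d(f,Id_X)\le\varepsilon$: if $f_1,f_2$ are two such, then $d(T_gf_i(x),S_gx)=d(f_i(S_gx),S_gx)\le\varepsilon$ for all $g$, so $\sup_{g\in G}d(T_gf_1(x),T_gf_2(x))\le2\varepsilon\le\eta$ and $f_1(x)=f_2(x)$. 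Finally, if $S$ is expansive with expansive constant $\eta_S\ge2\varepsilon$ and $f(x)=f(x')$, then applying $T_g$ and equivariance gives $f(S_gx)=T_gf(x)=T_gf(x')=f(S_gx')$, so $d(S_gx,S_gx')\le d(S_gx,f(S_gx))+d(f(S_gx'),S_gx')\le2\varepsilon\le\eta_S$ for every $g\in G$, and expansiveness of $S$ forces $x=x'$.

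I do not expect a deep obstacle here: once the independence lemmas reduce matters to a single symmetric generating set, the group $G$ enters only through the cocycle identity $T_{gh}=T_gT_h$ (in the equivariance step) and through the fact that an $S$-orbit is tested as a pseudo-orbit only on the generators in $A$. The points requiring the most care are the bookkeeping of constants in the uniqueness-of-tracing-point step — tracing within $\varepsilon$ yields a comparison within $2\varepsilon$, so the tracing scale must be kept below half the expansive constant — and the compactness argument for continuity of $f$, which is the classical Walters argument but run over a family indexed by $G$ rather than by $\Z$. Everything else is a routine transcription of \cite{Walters78}.
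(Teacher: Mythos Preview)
Your proof is correct and follows essentially the same architecture as the paper's: use POTP to define $f$ by tracing the $S$-orbit, use expansiveness for uniqueness of the tracing point and hence for well-definedness, equivariance, uniqueness of $f$, and injectivity. The one genuine difference is in the continuity step. The paper invokes an auxiliary lemma (Lemma~\ref{L-expansiveness determines the topology}) to the effect that expansiveness yields, for each $\varepsilon_1>0$, a \emph{finite} set $F\subset G$ such that $\sup_{g\in F}d(T_gx,T_gy)\le\eta$ forces $d(x,y)<\varepsilon_1$; continuity of $f$ then follows by combining equivariance with uniform continuity of finitely many $S_g$'s, and this is why the paper works with $\varepsilon<\eta/3$ (the estimate $\varepsilon+\eta/3+\varepsilon<\eta$ is needed there). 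Your sequential compactness argument bypasses that lemma entirely: any subsequential limit of $f(x_k)$ is itself an $\varepsilon$-tracing point of $\{S_gx\}$, hence equals $f(x)$. This is slightly more elementary and only requires $2\varepsilon\le\eta$, as you note; the paper's route, on the other hand, yields an explicit uniform modulus of continuity for $f$ and isolates a lemma of independent interest.
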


\hspace{2mm}
To prove the above theorem, we need following two lemmas.

\begin{lem}
\label{L-expansive and POTP imply uniqueness of trace}
Let $T \in Act(G,X)$ be an expansive action with POTP with respect to a symmetic finitely generating set $A$ of $G$. Let $\varepsilon<\eta/2$ and $\delta$ corresponds to $\varepsilon$ as in Definition \ref{D-POTP}, where $\eta$ is an expansive constant of $T$. Then every $\delta$ pseudo-obrit of $T$  is $\varepsilon$-traced by a unique point in $X$.
\end{lem}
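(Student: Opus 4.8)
The plan is to separate the statement into two parts: the \emph{existence} of an $\varepsilon$-tracing point, which is nothing but the POTP hypothesis applied with the given $\varepsilon$ and its associated $\delta$, and the \emph{uniqueness} of such a point, which I expect to follow directly from expansiveness via the triangle inequality.

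First I would fix a $\delta$ pseudo-orbit $\{x_g\}_{g\in G}$ of $T$ with respect to $A$. By the choice of $\delta$ in Definition \ref{D-POTP}, there is at least one point $x\in X$ with $d(T_gx,x_g)<\varepsilon$ for every $g\in G$, which settles existence. For uniqueness, suppose $x,y\in X$ both $\varepsilon$-trace $\{x_g\}_{g\in G}$, so that $d(T_gx,x_g)<\varepsilon$ and $d(T_gy,x_g)<\varepsilon$ for all $g\in G$. Then for every $g\in G$ the triangle inequality gives $d(T_gx,T_gy)\le d(T_gx,x_g)+d(x_g,T_gy)<2\varepsilon$, and hence $\sup_{g\in G}d(T_gx,T_gy)\le 2\varepsilon<\eta$. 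Since $\eta$ is an expansive constant of $T$, the characterization of expansiveness recalled just before Theorem \ref{T-expansiveness and POTP imply topological stability} forces $x=y$.

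There is no real obstacle here; the only points to keep in mind are that the inequality $2\varepsilon<\eta$ is exactly what the hypothesis $\varepsilon<\eta/2$ delivers, and that the relevant form of expansiveness is the one phrased in terms of an expansive constant. The symmetry of $A$ is not used in this lemma; it is retained in the statement only to match the hypotheses under which the lemma will be invoked in the proof of Theorem \ref{T-expansiveness and POTP imply topological stability}.
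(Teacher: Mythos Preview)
Your proof is correct and matches the paper's own argument essentially line for line: existence comes directly from the POTP hypothesis, and uniqueness follows from the triangle inequality $d(T_gx,T_gy)\le d(T_gx,x_g)+d(x_g,T_gy)<2\varepsilon<\eta$ together with the expansive-constant characterization of expansiveness. Your remark that the symmetry of $A$ plays no role in this lemma is also accurate.
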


\begin{proof}
Let $\{x_g\}_{g\in G}$ be a $\delta$ pseudo-orbit of $T,$ and let $x,y$ be two points that $\varepsilon$-trace $\{x_g\}_{g\in G}$. Then one has $d(T_gx,T_gy)\leq d(T_gx,x_g)+d(x_g,T_gy)<2\varepsilon<\eta$ for every $g\in G$. By the expansiveness of $T$, we get $x=y$.
\end{proof}

\begin{lem}
\label{L-expansiveness determines the topology}
Let $T \in Act(G,x)$ be an expansive action with an expansive constant $\eta$. Then, for any $\varepsilon>0$, there exists a non-empty finite subset $F$ of $G$ such that whenever $\sup_{g\in F}d(T_gx,T_gy)\leq \eta$, we have $d(x,y)<\varepsilon$.
\end{lem}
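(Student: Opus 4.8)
The plan is a routine compactness argument by contradiction that makes essential use of the fact that a finitely generated group is countable. Fix $\varepsilon>0$ and suppose, towards a contradiction, that the conclusion fails: for \emph{every} non-empty finite subset $F\subseteq G$ there exist points $x,y\in X$ with $\sup_{g\in F}d(T_gx,T_gy)\le\eta$ but $d(x,y)\ge\varepsilon$. Enumerate $G=\{g_1,g_2,\dots\}$ (a list that is finite or countably infinite, since $G$ is finitely generated), and apply the assumption to the exhausting sequence of finite sets $F_n=\{g_1,\dots,g_n\}$. This produces sequences $(x_n),(y_n)$ in $X$ such that $d(T_{g_i}x_n,T_{g_i}y_n)\le\eta$ for all $i\le n$, while $d(x_n,y_n)\ge\varepsilon$ for all $n$.

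Next I would invoke compactness of $X$ to extract a subsequence along which $x_n\to x$ and $y_n\to y$. Since $d(x_n,y_n)\ge\varepsilon$ for every $n$, continuity of $d$ gives $d(x,y)\ge\varepsilon>0$, so $x\ne y$. On the other hand, fix any index $i$; for all $n\ge i$ we have $d(T_{g_i}x_n,T_{g_i}y_n)\le\eta$, and letting $n\to\infty$ along the chosen subsequence, using the continuity of $T_{g_i}$ and of $d$, we obtain $d(T_{g_i}x,T_{g_i}y)\le\eta$. As $i$ was arbitrary and every element of $G$ appears in the enumeration, this yields $\sup_{g\in G}d(T_gx,T_gy)\le\eta$ with $x\ne y$, contradicting the fact that $\eta$ is an expansive constant of $T$. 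Hence the required non-empty finite set $F$ exists; in fact the argument shows one may take $F=F_N$ for $N$ large enough.

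I expect no genuine obstacle here. The only point needing a little care is the bookkeeping in the limiting step: one must ensure that for each fixed $g\in G$ the inequality $d(T_gx_n,T_gy_n)\le\eta$ holds for all sufficiently large $n$, so that it passes to the limit, and this is exactly what the choice of an exhausting sequence $F_n$ guarantees. If one preferred not to enumerate $G$, the same proof goes through verbatim with a net indexed by the directed set of all finite subsets of $G$ ordered by inclusion, together with a convergent subnet.
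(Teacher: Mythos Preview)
Your proof is correct and follows essentially the same route as the paper's: a contradiction argument using an exhausting sequence of finite subsets of $G$, extraction of a convergent subsequence by compactness of $X$, and passage to the limit via continuity of each $T_g$ to violate expansiveness. The only cosmetic difference is that you obtain the exhausting sequence by explicitly enumerating the countable group $G$, whereas the paper simply posits an increasing chain $F_1\subset F_2\subset\cdots$ with $G=\bigcup_n F_n$.
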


\begin{proof}
Assume that there exists $\varepsilon>0$ such that for any non-empty finite subset $F$ of $G$, there exist $x_F,y_F\in X$ such that $$\sup_{g\in F} d(T_gx,T_gy)\leq \eta \mbox{ and  }d(x_F,y_F)\geq\varepsilon.$$ Choose a sequence of finite subsets $F_n$ of $G$ satisfying 
$$ F_1\subset F_2\subset \cdots  \mbox{  and  }G=\bigcup_{n\in \N}F_n.$$ 
Then for every $n\in \N$, there exist $x_n,y_n\in F_n$ such that 
$$\sup_{g\in F_n} d(T_gx_n,T_gy_n)\leq \eta \mbox{  and  } d(x_n,y_n)\geq\varepsilon.$$
After taking a subsequence, we can assume that $x_n\to x$ and $y_n\to y$. Then we have $d(T_gx,T_gy)\leq \eta$ for all $g\in G$ and $d(x,y)\geq\varepsilon$, which contradicts with the expansiveness of $T$.
\end{proof}

\begin{proof}
[Proof of Theorem \ref{T-expansiveness and POTP imply topological stability}]
Let $\eta$ be an expansive constant of $T$ and let $\varepsilon<\eta/3$. Let $A$ be a finitely generating set of $G$. Choose $\delta$ corresponding to $\varepsilon$ as in Definition \ref{D-POTP}. Let $S$ be a continuous action of $G$ on $X$ with $d_A(T,S)<\delta$. For any $x \in X$, we see that the $S$-orbit $\{S_gx\}_{g\in G}$ of $x$ is a $\delta$ pseudo-orbit for $T$. By Lemma \ref{L-expansive and POTP imply uniqueness of trace}, there is a unique point denoted by $f(x)$ whose $T$-orbit $\varepsilon$ traces $\{S_gx\}_{g\in G}$. Then we have the map $f:X\to X$ satisfying $$d(T_gf(x),S_gx)<\varepsilon \mbox{ for all } g\in G, x\in X \mbox{   (*)  }.$$ In particular, we have $d(f(x),x)<\varepsilon$ for every $x\in X$, and hence $d(f,Id_X)\leq\varepsilon$.

\hspace{2mm}
 Now we will prove that $T_gf(x)=fS_g(x)$ for every $x\in X,g\in G$. In fact, we have
 $$d(T_hf(S_gx),S_{hg}x)=d(T_hf(S_gx),S_hS_gx)<\varepsilon$$
 for $x \in X$ and $g,h \in G$. On the other hand, applying (*) again, we obtain
 $$d(T_hT_gf(x),S_{hg}x)=d(T_{hg}f(x),S_{hg}x)<\varepsilon.$$
Then we get $T_gf(x)=fS_g(x)$  by Lemma \ref{L-expansive and POTP imply uniqueness of trace}.

\hspace{2mm}
Next we will show that $f$ is continuous. Let $\varepsilon_1>0$. By Lemma \ref{L-expansiveness determines the topology}, there exists a non-empty finite subset $F$ of $G$ such that whenever $\sup_{g\in F}d(T_gx,T_gy)\leq \eta$ one has $d(x,y)<\varepsilon_1$. Choose $\delta_1>0$ such that for every $x,y\in X$ with $d(x,y)<\delta_1$, one has $d(S_gx,S_gy)<\eta/3$ for every $g\in F$. Then, for any $x,y\in X$ with $d(x,y)<\delta_1$ and $g \in F$, we get 
\begin{eqnarray*}
d(T_gf(x),T_gf(y))&=&d(fS_g(x),fS_g(y))\\
&\leq & d(fS_g(x),S_g(x))+d(S_g(x),S_g(y))+d(S_g(y),fS_g(y))\\
&<& \varepsilon+\eta/3+\varepsilon<\eta.
\end{eqnarray*}
Thus $d(f(x),f(y))<\varepsilon_1$ for $x,y\in X$ with $d(x,y)<\delta_1$. This mean that $f$ is continuous. 

\hspace{2mm}
Assume that there is another map $f_1$ such that $T_gf_1=f_1S_g$ for all $g\in G$ and $d(f_1,Id_X)\leq\varepsilon$. Then, for any $g\in G$, we have

\begin{eqnarray*}
d(T_gf(x),T_gf_1(x))&\leq & d(T_gf(x),S_g(x))+d(S_g(x),T_gf_1(x)) \\
&=&d(fS_g(x),S_g(x))+d(S_g(x),f_1S_g(x))<2\varepsilon<\eta.
\end{eqnarray*}
By the expansiveness of $T$, we get $f(x)=f_1(x)$. Finally, we will prove the last assertion. Let $f(x)=f(y)$. Then for any $g\in G$, we have 
\begin{eqnarray*}
d(S_gx,S_gy)&\leq & d(S_gx,f(S_gx))+d(f(S_gx),f(S_gy))+d(f(S_gy),S_gy)\\
&=&d(S_gx,f(S_gx))+d(T_gf(x),T_gf(y))+d(f(S_gy),S_gy)
<2\varepsilon\leq \eta_S,
\end{eqnarray*}
and so $x=y$ by the expansiveness of $S$.
\end{proof}

\hspace{2mm}
Next we will provide a class of topologically stable actions. First, we recall the definition of nilpotent group. Let $G$ be a countable group. The \textit{lower central series} of $G$ is the sequence $\{G_i\}_{i\geq 0}$ of subgroups of $G$ defined by $G_0=G$ and $G_{i+1}=[G_i,G]$, where $[G_i,G]$ is the subgroup of $G$ generated by all commutators $[a,b]:=aba^{-1}b^{-1}$, $a\in G_i,b\in G$. The group $G$ is said to be \textit{nilpotent} if there exists $n\geq 0$ such that $G_n=\{e_G\}$. The such smallest $n$ is called the \textit{nilpotent degree} of $G$. 

\begin{thm}
\label{T-POTP of actions of nilpotent groups}
Let $G$ be a finitely generated virtually nilpotent group, i.e. there exists a nilpotent subgroup $H$ of $G$ with finite index. Let $T$ be a continuous action of $G$ on $X$. If there exists an element $g\in G$ such that $T_g$ is expansive and has POTP, then $T$ is topologically stable.
\end{thm}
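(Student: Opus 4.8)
The plan is to derive the theorem from Theorem~\ref{T-expansiveness and POTP imply topological stability}: since $G$ is finitely generated, it suffices to prove that $T$, regarded as a $G$-action, is expansive and has POTP. Expansiveness is immediate: if $\eta$ is an expansive constant of the homeomorphism $T_g$, then for $x\neq y$ we have $\sup_{h\in G}d(T_hx,T_hy)\geq\sup_{k\in\Z}d(T_g^kx,T_g^ky)>\eta$, so $T$ is expansive with the same constant $\eta$. (If $g$ has finite order then $T_g$ expansive forces $X$ to be finite, in which case the statement is trivial, so we may assume $\langle g\rangle\cong\Z$.) Hence the whole content is to show that $T$ has POTP.

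Two elementary reductions come first. If $H\leq G$ has finite index, then some $g^k$ lies in $H$, and $T_{g^k}$ is again expansive and has POTP (a standard fact: for $n\neq 0$, $T_g^n$ inherits both properties from $T_g$); moreover POTP of the restricted action $T|_H$ implies POTP of $T$ --- given a $\delta$ pseudo-orbit $\{x_h\}_{h\in G}$ for $T$ with respect to $A$, its restriction to $H$ is a pseudo-orbit for $T|_H$ with respect to a generating set of $H$ made of bounded words in $A$, so it is traced by some point $q$, and uniform continuity of $T$ along the finitely many coset representatives shows that $q$ also traces the whole pseudo-orbit. Thus we may pass to a finite-index torsion-free nilpotent subgroup and assume $G$ is torsion-free finitely generated nilpotent. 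Conversely, if $g=g_0^k$ for some $g_0\in G$ then $T_{g_0}$ is expansive with POTP as well, which in the abelian situation lets us replace $g$ by a primitive vector.

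The core step is the following: \emph{if $g\in Z(G)$ then $T$ has POTP}. Fix $\varepsilon>0$, choose $\varepsilon'<\min\{\varepsilon,\eta/2\}$, and take $\delta$ small. For a $\delta$ pseudo-orbit $\{x_h\}_{h\in G}$ of $T$ and any $h\in G$, the two-sided sequence $\{x_{g^kh}\}_{k\in\Z}$ is a $\delta_1$ pseudo-orbit for the homeomorphism $T_g$, where $\delta_1\to 0$ as $\delta\to 0$ and depends only on $\delta$ and the word length of $g$ (telescope along a word representing $g$, using $g^{k+1}h=g\cdot g^kh$); by POTP and Lemma~\ref{L-expansive and POTP imply uniqueness of trace} it is $\varepsilon'$-traced by a unique point $Q(h)$. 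Uniqueness gives $Q(g^mh)=T_g^mQ(h)$. Now using that $g$ is central, so $T_g^kT_a=T_aT_g^k$ and $x_{g^kah}=x_{ag^kh}$, one checks that $T_aQ(h)$ traces $\{x_{g^kah}\}_k$ within some $\varepsilon''<\eta/2$, whence $Q(ah)=T_aQ(h)$ for every generator $a$. Iterating along a word for $h$ yields $Q(h)=T_hQ(e)$, so $d(T_hQ(e),x_h)<\varepsilon'<\varepsilon$ for all $h$; therefore $Q(e)$ traces the pseudo-orbit and $T$ has POTP.

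What remains is to discharge the centrality hypothesis in the nilpotent case, and this I expect to be the main obstacle. I would induct on the nilpotency class of $G$. The base case (class $\leq 1$, i.e.\ $G$ abelian) is settled by the two reductions together with the core step, since a primitive $g$ in $\Z^d$ is central. In the inductive step ($G$ of class $n\geq 2$), if $g$ is central we are done; otherwise one must work over a suitable central subgroup $Z\leq Z(G)$ and relate the tracing problem for $T$ to ones in which the image of $g$ is ``more central'' --- of smaller depth in the upper central series --- so as to invoke the inductive hypothesis. The delicate point is that the correction terms arising when one straightens a $G$-pseudo-orbit along $g$ are the commutators $[g,a]$: these lie deeper in the lower central series, but the homeomorphisms $T_{[g,a]}$ need not be expansive or have POTP, and their powers carry no a priori modulus-of-continuity control. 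Absorbing these errors --- using expansiveness of the full action $T$ and Lemma~\ref{L-expansiveness determines the topology}, rather than merely expansiveness of $T_g$ --- is where the real work lies.
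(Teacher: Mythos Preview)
Your overall strategy --- show that $T$ is expansive and has POTP, then invoke Theorem~\ref{T-expansiveness and POTP imply topological stability} --- is exactly the paper's, and your expansiveness argument, your reduction to a finite-index nilpotent subgroup, and your ``core step'' for central $g$ are all correct. The core step is essentially the paper's Lemma~\ref{L-expansiveness+POTP for normal subgroups implies POTP} specialized to the normal subgroup $H=\langle g\rangle$.

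The genuine gap is the inductive step, which you yourself flag as the obstacle. The upper-central-series route you sketch (trying to make $g$ progressively ``more central'' and absorbing the commutator errors $[g,a]$ via Lemma~\ref{L-expansiveness determines the topology}) does not lead anywhere clean, for precisely the reason you identify: one has no dynamical control over $T_{[g,a]}$. The paper avoids this entirely by two moves. First, it states the core step in greater generality (Lemma~\ref{L-expansiveness+POTP for normal subgroups implies POTP}): if $H\trianglelefteq G$ is \emph{any} finitely generated normal subgroup with $T|_H$ expansive and having POTP, then $T$ has POTP. The proof is essentially yours, except that centrality is replaced by normality via $hb=bh'$ with $h,h'\in H$ and $b$ a generator of $G$; expansiveness of the full restricted action $T|_H$ (not merely of $T_g$) is what forces $T_bQ(h)=Q(bh)$. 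Second --- and this is the group-theoretic trick you are missing --- for nilpotent $G$ of class $n\geq 2$ one takes
\[
K:=\langle [G,G],\,g\rangle.
\]
One checks (the paper cites \cite{OT}) that $K$ is normal in $G$ and has nilpotency class at most $n-1$. Since $g\in K$, the inductive hypothesis gives that $T|_K$ has POTP, and $T|_K$ is expansive because $T_g$ is; the generalized core step then finishes the induction. No analysis of the individual commutator maps $T_{[g,a]}$ is ever required.
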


\hspace{2mm}
The following two lemmas were proved implicitly in (\cite[Lemmas 1 and 2]{OT}). For convenience, we provide those implicit proofs here.

\begin{lem}
\label{L-expansiveness+POTP for normal subgroups implies POTP}
Let $G$ be a finitely generated group and $H$ be a finitely generated normal subgroup of $G$. Let $T$ be a continuous action of $G$ on $X$. If the restriction action $T_H$ of $T$ to $H$ is expansive and has POTP then $T$ has POTP. 
\end{lem}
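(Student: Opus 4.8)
The plan is to trace a pseudo-orbit of the whole action $T$ by tracing its restrictions to the right cosets of $H$, where the hypotheses on $T_H$ are available, and then to glue the coset-traces into a single point using normality of $H$ together with the expansiveness of $T_H$. First I would fix a symmetric finite generating set $B$ of $H$ and extend it to a symmetric finite generating set $A\supseteq B$ of $G$; since POTP does not depend on the chosen generating set, it suffices to verify POTP of $T$ with respect to this particular $A$. Write $\eta$ for an expansive constant of $T_H$. The key structural remark is that, for any $\delta$ pseudo-orbit $\{x_g\}_{g\in G}$ of $T$ with respect to $A$ and any fixed $g\in G$, the sequence $h\mapsto x_{hg}$ indexed by $h\in H$ is a $\delta$ pseudo-orbit of $T_H$ with respect to $B$: for $b\in B\subseteq A$ the inequality $d(T_b x_{hg}, x_{(bh)g})<\delta$ is precisely the defining inequality of $T_H$ along the coset $Hg$.

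Next I would fix the scales. Given a target accuracy $\varepsilon_0\in(0,\eta/2)$, using that $A$ is finite and each $T_a$ is uniformly continuous I would choose $\varepsilon_1\in(0,\varepsilon_0]$ and a slack $\rho>0$ so small that $d(T_a u, T_a v)+\varepsilon_1+\rho\le\eta$ for every $a\in A$ whenever $d(u,v)\le\varepsilon_1$. Let $\delta_H$ be the constant supplied by POTP of $T_H$ for the accuracy $\varepsilon_1$ (Definition \ref{D-POTP}), and set $\delta:=\min\{\delta_H,\rho\}$. By the coset remark, for each $g$ the sequence $\{x_{hg}\}_{h\in H}$ is then $\varepsilon_1$-traced, and by Lemma \ref{L-expansive and POTP imply uniqueness of trace} uniquely so; call the tracing point $z_g$, so that $d(T_h z_g, x_{hg})<\varepsilon_1$ for all $h\in H$, and in particular $d(z_g,x_g)<\varepsilon_1$.

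The heart of the argument is to show $T_a z_g = z_{ag}$ for every $a\in A$ and $g\in G$. To compare $T_a z_g$ with $z_{ag}$ I would estimate their $T_H$-orbits: for $h\in H$, normality lets me rewrite $ha=a(a^{-1}ha)$ with $h':=a^{-1}ha\in H$, so that $T_h T_a z_g = T_a T_{h'} z_g$, and a triangle inequality through the pseudo-orbit points gives $d(T_h T_a z_g, T_h z_{ag})\le d(T_a T_{h'}z_g, T_a x_{h'g})+d(T_a x_{h'g}, x_{a h' g})+d(x_{ah'g}, T_h z_{ag})$, where $ah'g=h(ag)$. The three summands are bounded respectively by the modulus of continuity of $T_a$ at scale $\varepsilon_1$ (from the trace of $z_g$), by $\delta$ (pseudo-orbit), and by $\varepsilon_1$ (from the trace of $z_{ag}$); by the choice of $\varepsilon_1,\rho$ the supremum over $h\in H$ is at most $\eta$, so expansiveness of $T_H$ forces $T_a z_g=z_{ag}$. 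Inducting on word length in $A$ then yields $T_g z_e=z_g$ for all $g\in G$. Taking $x:=z_e$ gives $d(T_g x, x_g)=d(z_g,x_g)<\varepsilon_1\le\varepsilon_0$ for every $g$, which is exactly the required $\varepsilon_0$-tracing, so $T$ has POTP.

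The main obstacle is the distortion introduced by the generators: applying $T_a$ in the gluing step can enlarge distances, so the estimate $\le\eta$ needed to invoke expansiveness only survives if $\varepsilon_1$ is chosen small relative to $\eta$ and the moduli of continuity of the finitely many $T_a$. This forces the quantifier order in which $\varepsilon_1$ (hence $\delta_H$) is fixed before $\delta$ is extracted, and it is the place where finiteness of $A$ and normality of $H$ are both used essentially; the remaining verifications (the coset pseudo-orbit property and the word-length induction) are routine.
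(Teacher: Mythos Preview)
Your proposal is correct and follows essentially the same route as the paper's own proof: extend a symmetric generating set of $H$ to one of $G$, trace each coset pseudo-orbit $\{x_{hg}\}_{h\in H}$ by a unique point $z_g$ using POTP and expansiveness of $T_H$, and then use normality of $H$ together with the three-term triangle estimate and expansiveness to show $T_a z_g=z_{ag}$, concluding by word-length induction. The only differences are notational (your $A,B,\eta$ play the roles of the paper's $B,A,c$) and in the bookkeeping of the constants, where you are in fact somewhat more explicit about the quantifier order than the original.
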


\begin{proof}
Let $A$ be a symmetric finitely generating set of $H$. We can add more elements to $A$ to get a symmetric finitely generating set $B$ of $G$. Let $c$ be an expansive constant of $T_H$. Since $X$ is compact and $B$ is finite, there exists $0<\eta<c/3$ such that $d(T_bx,T_by)<c/3$ for every $b\in B$ and every $x,y\in X$ with $d(x,y)<\eta$. Let $\varepsilon>0$ be a constant with $\varepsilon<\eta$. We can choose $0<\delta<\varepsilon$ such that every $\delta$ pseudo-orbit for $T_H$ with respect to $A$ is $\varepsilon$-traced by some point of $X$. Let $\{x_g\}_{g\in G}$ be a $\delta$ pseudo--orbit of $T$ with respect to $B$. For every $g\in G$, the sequence $\{x_{hg}\}_{h\in H}$ is a $\delta$ pseudo-orbit of $T_H$ with respect to $A$. Since $T_H$ is expansive, by Lemma \ref{L-expansive and POTP imply uniqueness of trace}, there exists a unique point $y_g\in X$ such that 
\begin{align}
\label{F-expansiveness on normal subgroups}
d(x_{hg},T_hy_g)<\varepsilon, \mbox { for every } h\in H.
\end{align} 
Now we will prove that $y_g=T_gy_e$ for $g\in G$. Fix $g\in G$ and $b\in B$. For any $h\in H$, there exists $h'\in H$ such that $hb=bh'$. Then we have  $$d(x_{bh'g},T_hy_{bg})=d(x_{hbg},T_hy_{bg})<\varepsilon \mbox{  and  } d(T_bx_{h'g},T_bT_{h'}y_g)<c/3.$$ Hence we get
$$d(T_hy_{bg}, T_hT_by_g)\leq d(T_hy_{bg},x_{bh'g})+d(x_{bh'g}, T_bx_{h'g})+d(T_bx_{h'g},T_{bh'}y_g)<c.$$
Since $T_H$ is expansive, we have $T_by_g=y_{bg}$ for every $b\in B$. As $B$ is a symmetric generating set of $G$, we get $T_gy_e=y_g$ for every $g\in G$. On the other hand, by applying $h=e_G$ for (\ref{F-expansiveness on normal subgroups}), we have $d(x_g,y_g)<\varepsilon$ for every $g\in G$ and hence $d(x_g,T_gy_e)<\varepsilon$ for $g\in G$.
\end{proof}

\begin{lem}
\label{L-POTP for actions of nilpotent groups}
Let $G$ be a finitely generated nilpotent group and $T$ be a continuous action of $G$ on $X$. If there exists $g\in G$ such that $T_g$ is expansive and has POTP then $T$ has POTP.
\end{lem}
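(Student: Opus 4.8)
The plan is to reduce the nilpotent case to the normal-subgroup case of Lemma \ref{L-expansiveness+POTP for normal subgroups implies POTP} by an induction on the nilpotency degree of $G$. The key point is that the single element $g$ with $T_g$ expansive and having POTP generates an infinite cyclic (or finite) subgroup, and inside a nilpotent group we can build up from $\langle g\rangle$ to all of $G$ through a chain of subgroups in which each is normal in the next, or at least normal in a larger subgroup whose expansiveness and POTP we already control.

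More concretely, first I would handle the abelian-by-nothing base: if $G$ is generated by $g$ together with central elements, then $\langle g\rangle$ need not be normal, so instead I would use the lower central series. Set $G = G_0 \supseteq G_1 \supseteq \cdots \supseteq G_n = \{e_G\}$ with $G_{i+1} = [G_i, G]$. Each $G_i$ is normal in $G$, hence $H_i := \langle g, G_i\rangle$ is a subgroup with $G_i \triangleleft H_i$ and $H_i / G_i$ cyclic, and $H_n = \langle g\rangle$, $H_0 = G$ (since $g$ together with $G$... ) — wait, I would instead argue downward: $H_n = \langle g \rangle$ carries an expansive action with POTP (this is the hypothesis on $T_g$, noting $T_g$ expansive with POTP is exactly saying the $\Z$-action, i.e. the $\langle g\rangle$-action, is expansive with POTP). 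Then I would show inductively that the action of $H_i$ is expansive and has POTP for each $i$ descending to $i=0$. The step from $H_{i+1}$ to $H_i$: we have $H_{i+1} \triangleleft H_i$? Not in general. The safer route is: $G_{i+1} \triangleleft G$, and $G_i / G_{i+1}$ is finitely generated abelian and central in $G/G_{i+1}$; one argues on the quotients, or directly observes that $\langle g, G_{i+1}\rangle \triangleleft \langle g, G_i\rangle$ because $G_i$ normalizes $G_{i+1}$ and $[G_i, \langle g \rangle] \subseteq [G_i, G] = G_{i+1}$, so conjugation by elements of $G_i$ sends $g$ into $g\cdot G_{i+1} \subseteq \langle g, G_{i+1}\rangle$ and sends $G_{i+1}$ to itself. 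Hence $\langle g, G_{i+1}\rangle$ is normal in $\langle g, G_i \rangle$.

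So the induction runs as follows. Let $K_i := \langle g, G_i\rangle$ so $K_n = \langle g\rangle$ and $K_0 = G$, and each $K_{i+1} \triangleleft K_i$ with $K_i / K_{i+1}$ finitely generated abelian. I claim each $K_i$ acts expansively with POTP; the base case $i=n$ is the hypothesis (expansive + POTP of $T_g$ is the same as expansiveness + POTP of the action of $\Z \cong \langle g\rangle$, or of the finite group $\langle g\rangle$ in the torsion case — and a restriction of an expansive action to a finite-index... actually for the cyclic case expansiveness of $T_g$ literally is expansiveness of the $\langle g\rangle$-action). For the inductive step from $K_{i+1}$ to $K_i$: the restriction of the $K_i$-action to the normal subgroup $K_{i+1}$ is expansive (inductive hypothesis) and has POTP (inductive hypothesis), so Lemma \ref{L-expansiveness+POTP for normal subgroups implies POTP} gives that $K_i$ has POTP; and $K_i$ is automatically expansive because it contains the expansive subgroup $\langle g \rangle$ (a superset of an expansive collection of maps is expansive — $\sup_{h \in K_i} d(hx, hy) \geq \sup_{h \in \langle g\rangle} d(hx,hy)$). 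Thus $K_0 = G$ is expansive with POTP. Finally, apply Theorem \ref{T-expansiveness and POTP imply topological stability} to conclude $T$ is topologically stable, which is exactly the statement of Theorem \ref{T-POTP of actions of nilpotent groups} in the special case $H = G$; the general virtually-nilpotent case is presumably deferred, but if it is to be folded in here one would additionally invoke a lemma that POTP and expansiveness pass from a finite-index subgroup to the whole group.

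The main obstacle is verifying the normality claim $\langle g, G_{i+1} \rangle \triangleleft \langle g, G_i\rangle$ cleanly and making sure the ascending union / finite generation hypotheses needed to invoke Lemma \ref{L-expansiveness+POTP for normal subgroups implies POTP} are genuinely met at each stage — in particular that each $K_i$ is finitely generated (true since $G$ is finitely generated nilpotent, hence Noetherian, so every subgroup is finitely generated) and that $K_{i+1}$ is finitely generated as a normal subgroup is not what Lemma \ref{L-expansiveness+POTP for normal subgroups implies POTP} asks — it asks $H$ finitely generated as a group, which again holds by the Noetherian property. A secondary subtlety is the torsion case: if $g$ has finite order then $\langle g\rangle$ is finite and "$T_g$ expansive" forces $X$ to be finite, in which case everything is trivial; it is worth a one-line remark rather than a real argument.
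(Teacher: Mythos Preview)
Your argument is correct and closely related to the paper's, though organized differently. The paper inducts on the nilpotency class $n$: in the inductive step it sets $K := \langle [G,G], g\rangle$, invokes \cite[Proposition~2]{OT} for the facts that $K$ is normal in $G$ and has nilpotency class at most $n-1$, applies the inductive hypothesis to $K$ to get that $T_K$ is expansive with POTP, and then uses Lemma~\ref{L-expansiveness+POTP for normal subgroups implies POTP} once. You instead build the explicit tower $K_i = \langle g, G_i\rangle$ along the lower central series of $G$ and climb from $K_n = \langle g\rangle$ to $K_0 = G$, verifying $K_{i+1} \triangleleft K_i$ directly from $[G_i, G] = G_{i+1}$ and applying Lemma~\ref{L-expansiveness+POTP for normal subgroups implies POTP} at each stage. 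Your normality computation is correct, and the appeal to the Noetherian property of finitely generated nilpotent groups cleanly handles the finite-generation hypotheses of that lemma. The trade-off: your version is more self-contained (no need for the nilpotency-class bound on $K$ imported from \cite{OT}), while the paper's version is shorter because the recursion hides the tower.

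Two small clean-ups for the final write-up: the last paragraph about topological stability and the virtually nilpotent case belongs to Theorem~\ref{T-POTP of actions of nilpotent groups}, not to this lemma, whose conclusion is only that $T$ has POTP --- so stop once you have $K_0 = G$ with POTP; and remove the exploratory false start (``wait, I would instead\ldots'') before you settle on the lower-central-series chain.
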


\begin{proof}
We prove by induction on the nilpotent degree $n$ of $G$. If $n=1$ then the group $G$ is abelian and hence $H=\langle g\rangle$ is a normal subgroup of $G$. Thus, $T$ has POTP by Lemma \ref{L-expansiveness+POTP for normal subgroups implies POTP}. Let $n>1$ and assume that the statement of the lemma is true for all nilpotent groups with nilpotent degree is less than or equal to $n-1$. Put $G_1:=[G,G]$ and $K:=\langle G_1,g\rangle$. Then $K$ has the nilpotent degree at most $n-1$ \cite[Proposition 2]{OT}. It is known that $G_1$ is finitely generated \cite[Lemma 6.8.4]{CC2010} and hence $K$ is finitely generated. Thus, from the induction assumption, we know $T_K$ has POTP. Since $T_g$ is expansive and $g\in K$, we have $T_K$ is expansive. As $K$ is a normal subgroup of $G$ \cite[Proposition 2]{OT}, we complete the proof by Lemma \ref{L-expansiveness+POTP for normal subgroups implies POTP}.
\end{proof}

\begin{proof}[Proof of Theorem \ref{T-POTP of actions of nilpotent groups}]
Let $H$ be a nilpotent normal subgroup of $G$ with finite index. Then $H$ is finitely generated \cite[Proposition 6.6.2]{CC2010}. Since $H$ has finite index in $G$, there exists $n\in\N$ such that $g^n\in H$. Because $T_g$ is expansive and has POTP, $T_{g^n}=T_g^n$ is also expansive and has POTP. Thus, from Lemma \ref{L-POTP for actions of nilpotent groups}, the action $T_H$ has POTP. Since $g^n\in H$ and $T_{g^n}$ is expansive, one has $T_H$ is expansive. Applying Theorem \ref{T-expansiveness and POTP imply topological stability} and Lemma \ref{L-expansiveness+POTP for normal subgroups implies POTP}, we complete the proof.
\end{proof}

\hspace{2mm}
In the following example given in  \cite[Example 1.1]{HSW}, we see that the integral Heisenberg group $H$ induces an action on the torus $\T^{3n}$ which is topologically stable.

\begin{example}
 Let $H$ be the integral Heisenberg group, i.e. $H=\langle a,b,c|ac=ca,bc=cb, ab=bac\rangle$. Then $H$ is a nilpotent group. For every $n\in N$, we let $x,y\in SL(n,\Z)$ be such that $xy=yx$. Put 
\begin{align*}
a= \begin{pmatrix}
    x & I_n & 0 \\
    0 & x & 0 \\
    0 & 0 & x
\end{pmatrix},
\mbox{  } b=\begin{pmatrix}
y & 0 & 0\\
0 & y & I_n\\
0 & 0 & y
\end{pmatrix},
\mbox{  }c=\begin{pmatrix}
I_n & 0 & x^{-1}y^{-1}\\
0& I_n & 0\\
0& 0 & I_n
\end{pmatrix}.
\end{align*}

Then $a,b,c$ satisfy the relations in $H$. Let $T$ be the natural action of $H \leq SL(3n,\Z)$ on $\T^{3n}$. If $x$ has no eigenvalues of modulus 1 then $a$ also has no eigenvalues of modulus 1. Then we see that $T_a$ is expansive and has POTP. Applying Theorems \ref{T-expansiveness and POTP imply topological stability} and  \ref{T-POTP of actions of nilpotent groups}, we know that the action $T$ is topologically stable. Similarly for the case $y$ has no eigenvalues of modulus 1.
\end{example}
\iffalse
\begin{align*}
H = \Big\{\begin{pmatrix}
    1 & x & z \\
    0 & 1 & y \\
    0 & 0 & 1
\end{pmatrix}
:x,y,z\in \Z\Big\}
\end{align*}

It is known that \cite[Lemma 2.3.2]{CC2010} the subgroup of $SL_2(\Z)$ generated by 
\begin{align*}
a = \begin{pmatrix}
    1 & 2 \\
    0 & 1
\end{pmatrix}
\mbox{ and }
 b = \begin{pmatrix}
    1 & 0 \\
    2 & 1
\end{pmatrix} 
\end{align*}
is a free group of rank 2. Let $T$ be the induced action of $\F_2$ on $\T^2$ from the natural action of $SL_2(\Z)$ on $\T^2$. Since the element 
\[
a= \begin{pmatrix}
1 & 2 \\
    0 & 1
\end{pmatrix}  
\]
has eigenvalues of modulus 1, we know that the map $T_{a}$ is not expensive \cite[page 143]{Walters}. Applying \cite[Theorem 4]{OP}, the action $T$ does not have POTP. 
\fi

\section{Shifts of finite type and POTP}

\hspace{2mm}
Let $A$ be a finitely generating set of $G$. For any $k\in \N$, we put $B(k):=\{g\in G:\ell_A(g)\leq k\}$. Let $S$ be a nonempty  finite set. We denote by $S^G$ the product space $\prod_G S$ endowed with the product topology. We consider the action of $G$ on $S^G$ by right shifts, i.e. $(gx)_h=x_{hg}$ for every $g,h\in G$ and $x\in S^G$. For any nonempty finite subset $F$ of $G$ and $x\in S^G$, we denote by $x_F$ the restriction of $x$ to $F$ and $\pi_F: S^G\to S^F$ the natural projection map. An element $f\in S^F$ is called a \textit{pattern}. If $F=\{g\}$ for some $g\in G$, we write it simply as $x_g$. A closed $G$-invariant subset $X$ of $S^G$ is called a \textit{subshift}. A pattern $f\in S^F$ is said to be \textit{allowed} for the shift $X$ if there exists $x$ in $X$ such that $f=x_F$. For every $k\in \N$, a pattern $f\in S^{B(k)}$ is called a \textit{k-block}, and we denote by $B_k(S)$ and $B_k(X)$ the set of all $k$-blocks and the set of all $k$-blocks allowed in $X$, respectively. Then, we  put $B(S)=\bigcup_{k\in \N}B_k(S)$ and $B(X)=\bigcup_{k\in \N}B_k(X)$. Let $W$ be a set of patterns. We define 
$$X_W:=\{x\in S^G: (gx)_F=x_{Fg}\notin W \mbox{ for all } g\in G \mbox{ and all finite subset  } F (\neq \emptyset ) \mbox{  of  }  G\}.$$

A subshift $X$ of $S^G$ is said to be of \textit{finite type} if there is a nonempty finite subset $F$ of $G$ and a subset $P$ of $S^F$ such that $$X=\{x\in S^G:\pi_F(gx)\in P, \forall g\in G\}=:S(F,P).$$ 
The subsets $F$ and $P$ are called a \textit{defining window} and a \textit{set of allowed words} for $X$, respectively.
It is clear that if $X$ is a subshift of finite type defined by a defining window $F$ and a set of allowed words $P$, then for any $F'\supset F$, it is also a subshift of finite type defined by $F'$ and $P'=\{x\in S^{F'}:\pi_F(x)\in P\}$. The following lemma is clear from the definitions of subshift of finite type and $X_W$.

\begin{lem}
\label{L-SFT coincides with subshift with finite forbidden words}
Let $X$ be a subset of $S^G$. Then $X$ is a subshift of finite type if and only if there exist a nonempty finite subset $F$ of $G$ and $W\subset S^F$ such that $X=X_W$.
\end{lem}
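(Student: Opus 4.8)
The claim is an "iff" between two descriptions of subshifts of finite type: $X = S(F,P)$ for some finite window $F$ and allowed set $P \subseteq S^F$, versus $X = X_W$ for some finite window $F$ and forbidden set $W \subseteq S^F$. The plan is to prove each direction by an explicit translation between the "allowed word" data and the "forbidden word" data, using the fact (already noted in the excerpt) that one may freely enlarge the defining window of a subshift of finite type.

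I would argue as follows. For the direction $(\Rightarrow)$, suppose $X = S(F,P) = \{x \in S^G : \pi_F(gx) \in P \ \forall g \in G\}$ with $F$ finite and nonempty and $P \subseteq S^F$. Set $W := S^F \setminus P$, the complementary set of patterns over the same window $F$. Then I claim $X = X_W$. Indeed, $x \in X_W$ means that for every $g \in G$ and every nonempty finite $F' \subseteq G$, the pattern $(gx)_{F'} = x_{F'g}$ does not lie in $W$; since $W \subseteq S^F$ consists only of patterns over the specific window $F$, the only constraint this imposes is that $(gx)_F \notin W$ for all $g \in G$, i.e. $(gx)_F = \pi_F(gx) \in P$ for all $g \in G$, which is exactly membership in $S(F,P) = X$. (One should note $X_W$ is automatically closed and $G$-invariant, so this is a genuine subshift.)

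For the direction $(\Leftarrow)$, suppose $X = X_W$ for a finite nonempty window $F$ and $W \subseteq S^F$. Set $P := S^F \setminus W$. The same unwinding of the definition of $X_W$ as above shows $x \in X_W$ iff $\pi_F(gx) \in P$ for all $g \in G$, i.e. $X_W = S(F,P)$, which is a subshift of finite type by definition. The only subtlety to flag is the role of "all nonempty finite $F'$" in the definition of $X_W$ versus the single window $F$ in the definition of $S(F,P)$: since $W$ only contains patterns supported exactly on $F$, the conditions $(gx)_{F'} \notin W$ for $F' \neq F$ are vacuous, so the two conditions coincide. If one wanted to be scrupulous about the case where $W$ is specified over a window and one later enlarges it, the remark preceding the lemma — that $S(F,P)$ with window $F$ equals $S(F',P')$ with any larger window $F' \supseteq F$ — handles the compatibility, and an analogous padding works for $X_W$.

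The main (and essentially only) obstacle here is bookkeeping: being careful that in the definition of $X_W$ the forbidden set $W$ lives over a single fixed finite window, so that the universally-quantified condition over all finite $F'$ degenerates to a condition over $F$ alone. Once that is observed, both directions are immediate by taking $W$ and $P$ to be set-theoretic complements of each other inside $S^F$. No real analysis or group theory is needed beyond the definitions, which is why the excerpt calls the lemma "clear."
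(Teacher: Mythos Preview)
Your proof is correct and is exactly the natural unwinding of definitions that the paper has in mind; indeed, the paper offers no proof beyond the remark that the lemma ``is clear from the definitions of subshift of finite type and $X_W$.'' Your observation that the universal quantification over finite $F'$ in the definition of $X_W$ collapses to the single window $F$ (because patterns over distinct windows are formally distinct objects) is the only point requiring care, and you handle it correctly.
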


\hspace{2mm}
Now we establish a characterization of subshifts of finite type over finitely generated groups via POTP. This characterization extends the main results of \cite{Walters78} and \cite{Oprocha2008} when the acting group is $\Z$ and $\Z^d$, respectively. 

\begin{thm}
Let $G$ be a finitely generated group and $S$ be a nonempty finite set. Let $T$ be the right action of $G$ on $S^G$ and $X$ be a subshift of $S^{G}$. Then $X$ is  of finite type if and only if it has POTP, $i.e.$ $T$ has POTP on $X$.
\end{thm}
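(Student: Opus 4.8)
The plan is to prove both implications separately, using Lemma~\ref{L-SFT coincides with subshift with finite forbidden words} to replace ``subshift of finite type'' with ``$X = X_W$ for some finite $W \subset S^F$''.

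\textbf{($\Rightarrow$) Finite type implies POTP.} Suppose $X = S(F,P) = X_W$ with $F$ a finite defining window; enlarging $F$ if necessary, I would assume $F = B(k)$ for some $k \in \N$, so that an element of $X$ is determined by the requirement that every translate $\pi_{B(k)}(gx)$ lies in the allowed set $P$. The shift metric on $S^G$ can be taken so that $d(x,y) < \epsilon_k$ forces $x$ and $y$ to agree on $B(k)$, where $\epsilon_k \to 0$. Given $\varepsilon > 0$, choose $k$ with $\epsilon_k < \varepsilon$ large enough that $F \subset B(k)$, and let $\delta > 0$ be small enough that $d(T_a z, w) < \delta$ (for $a$ in the generating set) forces $T_a z$ and $w$ to agree on $B(k)$. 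Now let $\{x_g\}_{g \in G}$ be a $\delta$ pseudo-orbit in $X$. The key combinatorial point is to define $x \in S^G$ by $x_g := (x_g)_{e_G}$ (the symbol of $x_g$ at the identity) and check that $x \in X$: because consecutive pseudo-orbit points agree on large balls, the patterns $\pi_{B(k)}(T_g x)$ are glued from allowed patterns $\pi_{B(k)}(x_g) \in P$, hence $x$ avoids all forbidden words and lies in $X_W = X$. One then verifies $d(T_g x, x_g)$ is small, i.e. $x$ $\varepsilon$-traces the pseudo-orbit, by tracking which coordinates of $T_g x$ and $x_g$ coincide.

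\textbf{($\Leftarrow$) POTP implies finite type.} Suppose $X$ has POTP. I would argue by contradiction: assuming $X$ is not of finite type, I want to manufacture, for a suitable $\varepsilon$, a $\delta$ pseudo-orbit (for every $\delta > 0$) that cannot be $\varepsilon$-traced by any point of $X$. Concretely, fix $\varepsilon$ smaller than the diameter of a single cylinder (so $\varepsilon$-tracing forces coordinate-wise agreement at $e_G$, hence everywhere after translating). For each $k$, since $X \neq X_{B_k(S) \setminus B_k(X)}$ — i.e. the ``approximating SFT'' $X^{(k)} := \{x \in S^G : \pi_{B(k)}(gx) \in B_k(X)\ \forall g\}$ strictly contains $X$ — pick $y^{(k)} \in X^{(k)} \setminus X$: every $B(k)$-window of $y^{(k)}$ is an allowed block of $X$, yet $y^{(k)} \notin X$. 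Now chop $y^{(k)}$ into overlapping allowed blocks: for each $g \in G$ there is $z_g \in X$ with $\pi_{B(k)}(T_g z_g) = \pi_{B(k)}(g y^{(k)})$, and the family $\{z_g\}$ is a $\delta_k$ pseudo-orbit of $T$ on $X$ with $\delta_k \to 0$ as $k \to \infty$. If some $x \in X$ $\varepsilon$-traced $\{z_g\}$, then $x$ would agree with $y^{(k)}$ at $e_G$, and by translating, on all of $B(k)$; letting $k$ grow and passing to a convergent subsequence of the $y^{(k)}$'s, the limit would be a point of $X$ equal to a limit of the $y^{(k)}$'s — but one needs the limit to still witness non-membership. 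This last point is the delicate one and I'd handle it by a diagonal/compactness argument: rather than taking a single $y^{(k)}$, for the $\delta$ from POTP applied to $\varepsilon$, choose $k$ with $\delta_k < \delta$, get a tracing point $x_k \in X$ agreeing with $y^{(k)}$ on $B(k)$; a subsequential limit $x_\infty$ of the $x_k$ lies in $X$ and agrees with a subsequential limit $y_\infty$ of the $y^{(k)}$ on all of $G$, so $y_\infty \in X$. But $y_\infty$, being a limit of points each of whose windows are allowed, lies in the SFT-closure; the contradiction comes instead from choosing the $y^{(k)}$ to genuinely fail membership at a \emph{bounded} location — which is possible precisely because if no such bounded obstruction existed for any $k$, then $X = \bigcap_k X^{(k)}$ would already equal each $X^{(k)}$ for large $k$ (by compactness), making $X$ an SFT.

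\textbf{Main obstacle.} The routine direction is ($\Rightarrow$); the real work is ($\Leftarrow$), and specifically the compactness bookkeeping in the contradiction argument: one must pass from ``$X$ is not finite type'' to a \emph{single} $\varepsilon$ and a \emph{sequence} of pseudo-orbits with tracing failing, and the cleanest route is to first establish the auxiliary fact that $X = \bigcap_{k} X^{(k)}$ always holds for a subshift, so that $X$ is of finite type iff $X = X^{(k)}$ for some $k$ (using that a decreasing intersection of compact sets stabilizes relative to any one of them only under the finite-type hypothesis — in general it does not, which is exactly the content being characterized). I expect the proof to isolate this as a preliminary observation, then show POTP forces the stabilization $X = X^{(k_0)}$ where $k_0$ is read off from the $\delta$--$\varepsilon$ pair, which is where expansiveness of the shift (automatic on $S^G$) is implicitly used to turn $\varepsilon$-tracing into exact coordinate agreement.
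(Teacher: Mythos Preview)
Your forward direction ($\Rightarrow$) matches the paper's argument: define the tracing point by $x_g := (x^{(g)})_e$, then use the pseudo-orbit relation along a word for $h \in B(m)$ to show $(T_g x)_h = x^{(g)}_h$, which simultaneously gives $\varepsilon$-tracing and membership $x \in X$.

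For ($\Leftarrow$) your contradiction-plus-limits route is both unnecessarily complicated and, as you yourself sense, has a gap. The limit argument does not work: a subsequential limit $y_\infty$ of points $y^{(k)} \in X^{(k)} \setminus X$ can perfectly well land in $X$ (indeed $\bigcap_k X^{(k)} = X$, so any convergent sequence with $y^{(k)} \in X^{(k)}$ has limit in $X$), so no contradiction is obtained that way. Your attempted repair --- forcing the obstruction to occur at a bounded location --- is exactly what you cannot guarantee in general. The real point, which you almost reach in your final paragraph, is that no limits are needed at all: once $\varepsilon$ is chosen so small that $\varepsilon$-tracing forces agreement at the identity coordinate, and $\delta$ is obtained from POTP, a \emph{single} $k$ with $2^{-k} < \delta$ suffices. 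For any $y \in X^{(k)}$, the family $\{x^{(g)}\}_{g\in G}$ with $x^{(g)}_{B(k)} = (gy)_{B(k)}$ (note: $x^{(g)}$ approximates $gy$, not $y$) is a $\delta$ pseudo-orbit in $X$; tracing gives $x \in X$ with $x_g = (T_g x)_e = x^{(g)}_e = y_g$ for \emph{every} $g \in G$, hence $y = x \in X$. Thus $X^{(k)} \subset X$, i.e.\ $X = X^{(k)}$ is of finite type.

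This direct argument is precisely what the paper does (with $\varepsilon = 1/2$ and window $B(m+1)$): it never assumes $X$ is not of finite type, it simply reads off the defining window from the POTP data. Your indexing $\pi_{B(k)}(T_g z_g) = \pi_{B(k)}(g y^{(k)})$ is also slightly off --- it yields $(z_g)_g = y^{(k)}_g$ rather than $(z_g)_e = y^{(k)}_g$, which is what you need to conclude $x = y^{(k)}$ from tracing; the correct normalization is $x^{(g)}_{B(k)} = (gy)_{B(k)}$.
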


\begin{proof}
Let $A$ be a symmetric finitely generating set of $G$. We define a metric $d$ on $X$ by 
$$d(x,y)=2^{-k},~~ k:=\sup\{j\in \N:x_g=y_g, \mbox{ for all } g\in B(j)\}$$ 
for $x,y\in X$, where $B(j)=\{g \in G : \ell_{A}(g)\leq j \}$. Then $d$ is a compatible metric on $X$. Assume that $X$ is a subshift of finite type. Then we can choose $M\in \N$ such that $F=B(M)$ is a defining window for $X$. Let $W$ be the set of allowed words for $X$ with respect to the window $F$.
Let $\varepsilon>0$. Choose $m\in\N$ such that $2^{-m}<\varepsilon$ and $m>M$. Put $\delta=2^{-(m+1)}$. Then we will prove that every $\delta$ pseudo-orbit $\{x^{(g)}\}_{g\in G}$ for $T$ on $X$ with respect to $A$ will be $\varepsilon$-traced by some point $x\in X$. Let $\{x^{(g)}\}_{g\in G}$ be a $\delta$ pseudo-orbit for $T$ on $X$ with respect to $A$, i.e. $d(T_ax^{(g)}, x^{(ag)})<2^{-(m+1)}$ for every $g\in G,a\in A$. Then for any $h\in B(m), g\in G$ and $a\in A$, we have $$x^{(g)}_{ha}=(T_ax^{(g)})_h=x_h^{(ag)} \mbox{ (**)}.$$ 
Fix $h\in B(m)$. Then we can write $h$ as $a_1\cdots a_n$, where $a_i\in A$ for every $1\leq i\leq n$ and $n\leq m$. Put $x:=(x_g)_{g\in G}$, where $x_g:=x^{(g)}_{e}$ for every $g\in G$. Applying (**), one has $$(T_gx)_h=x_{hg}=x_e^{(hg)}=x_e^{(a_1\cdots a_ng)}=x_{a_1}^{(a_2\cdots a_ng)}=\cdots =x_{a_1\cdots a_n}^{(g)}=x_h^{(g)},$$ for every $g\in G$. Thus $d(T_gx,x^{(g)})\leq 2^{-m}<\varepsilon$ for every $g\in G$. Now we show that $x\in X$. Since $F=B(M)\subset B(m)$, we have $(T_gx)_f=x_f^{(g)}$  for $f\in F$ and $g\in G$. Hence we get $$\pi_F(T_gx)=\pi_F(x^{(g)})\in W$$ for every $g
\in G$. This implies $x\in X$. 

\hspace{2mm}
Now we will prove the converse. Assume that $X$ has POTP. Take $\delta>0$ such that every $\delta$ pseudo-orbit for $T$ on $X$ with respect to $A$ is $1/2$-traced. Choose $m\in\N$ such that $2^{-m}<\delta$. Let $W\subset B_{m+1}(S)\setminus B_{m+1}(X)$. Then from the definition of $X_W$, one has $X\subset X_W$. Now we claim that $X_W\subset X$. Let $y\in X_W$. Then for every $g\in G$, one has $(gy)_{B(m+1)}\in B_{m+1}(X)$ and hence there exists $x^{(g)}\in X$ such that $x^{(g)}_{B(m+1)}=(gy)_{B(m+1)}$. Then for any $a\in A$ and $g\in G$, we get $$(ax^{(g)})_{B(m)}=x^{(g)}_{B(m)a}=y_{B(m)ag}=x^{(ag)}_{B(m)}$$ because $B(m)a\subset B(m+1)$. Therefore, we have $d(ax^{(g)},x^{(ag)})\leq 2^{-m}<\delta$ and hence $\{x^{(g)}\}_{g\in G}$ is a $\delta$ pseudo-orbit for $T$ on $X$ with respect to $A$. As $X$ has POTP, there exists $x\in X$ such that $\{x^{(g)}\}_{g\in G}$ is $1/2$-traced by $x$, i.e. $d(gx,x^{(g)})<1/2$ for every $g\in G$. Hence we get 
$$x_{B(1)g}=x^{(g)}_{B(1)}=y_{B(1)g}$$ 
for $g\in G$, and thus $y=x\in X$. Applying Lemma \ref{L-SFT coincides with subshift with finite forbidden words}, we complete the proof.
\end{proof}
\section{Equicontinuous actions and POTP}

\hspace{2mm}
We say that an action $T \in Act(G,X)$ is \textit{equicontinuous} if for every $\varepsilon>0$, there exists $\delta>0$ such that for any $x,y\in X$ with $d(x,y)<\delta$, one has $d(gx,gy)<\varepsilon$ for every $g\in G$. $T$ is called \textit{distal} if for every $x\neq y\in X$, one has $\inf_{g\in G}d(gx,gy)>0$.

\hspace{2mm}
It is known that every compact, totally disconnected, metrizable space without isolated points is homeomorphic to the Cantor set and every equicontinuous action of $\Z$ on the Cantor space has POTP. In the following theorem, we can see that it still holds for actions of general groups.

\begin{thm}
\label{L-equicontinuity systems of Cantor spaces have POTP}
Let $G$ be an infinite, finitely generated group and let $T$ be an equicontinuous action of $G$ on the Cantor space $X$. Then $T$ has POTP.
\end{thm}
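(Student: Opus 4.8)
The proof will hinge on a single combinatorial fact, so I would isolate it first:

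\emph{Partition Lemma.} For every $\varepsilon>0$ there is a finite clopen partition $\mathcal P=\{P_1,\dots,P_n\}$ of $X$ with $\mathrm{diam}(P_i)<\varepsilon$ for all $i$, such that each $T_g$ permutes the atoms of $\mathcal P$; equivalently, there is a homomorphism $\sigma\colon G\to\Sym(n)$ with $T_gP_i=P_{\sigma_g(i)}$.

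Granting this, POTP is almost immediate. Fix a symmetric finite generating set $A$ (allowed, since POTP does not depend on this choice), fix $\varepsilon>0$, take $\mathcal P$ as above, put $\lambda:=\min_{i\ne j}d(P_i,P_j)>0$ and choose $\delta<\lambda$. If $\{x_g\}_{g\in G}$ is a $\delta$ pseudo-orbit, define $c(g)\in\{1,\dots,n\}$ by $x_g\in P_{c(g)}$. For $a\in A$ we have $T_ax_g\in T_aP_{c(g)}=P_{\sigma_a(c(g))}$, and $d(T_ax_g,x_{ag})<\delta<\lambda$ forces $x_{ag}\in P_{\sigma_a(c(g))}$, i.e. $c(ag)=\sigma_a(c(g))$. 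Induction on word length in $A$ gives $c(g)=\sigma_g(c(e))$ for all $g$. Now pick any $x\in P_{c(e)}$: then $T_gx\in T_gP_{c(e)}=P_{\sigma_g(c(e))}=P_{c(g)}$, so $d(T_gx,x_g)\le\mathrm{diam}(P_{c(g)})<\varepsilon$ for every $g$. Hence $x$ $\varepsilon$-traces the pseudo-orbit, and $T$ has POTP.

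For the Partition Lemma I would pass to the equivalent metric $D(x,y):=\sup_{g\in G}d(T_gx,T_gy)$, which is compatible with the topology \emph{precisely because $T$ is equicontinuous}, and for which every $T_g$ is an isometry. Let $\mathrm{Iso}(X,D)$ be the isometry group of $(X,D)$ with the topology of uniform convergence; by Arzel\`a--Ascoli it is a compact metrizable topological group, so the closure $H:=\overline{\{T_g:g\in G\}}$ is a compact topological group acting on $X$ by isometries, with all $T_g\in H$. Given $\varepsilon>0$, use that $X$ is a Cantor space to pick a finite clopen partition $\mathcal R=\{R_1,\dots,R_m\}$ with $D$-diameters $<\varepsilon$, and set $\rho:=\min_{i\ne j}D(R_i,R_j)>0$. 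The map $h\mapsto\sup_{x\in X}D(hx,x)$ is continuous on $H$, so $U:=\{h\in H:\sup_xD(hx,x)<\rho\}$ is an open neighbourhood of the identity; each $h\in U$ fixes every $R_i$ setwise (if $D(hx,x)<\rho$ then $hx$ lies in the same atom as $x$), so $U\subseteq\mathrm{Stab}_H(\mathcal R):=\{h\in H:h\mathcal R=\mathcal R\}$. A subgroup containing a neighbourhood of the identity is open, and an open subgroup of the compact group $H$ has finite index; therefore $\{T_g\mathcal R:g\in G\}$ is a finite set of partitions. Its common refinement $\mathcal P$ is then a finite clopen partition, it satisfies $T_g\mathcal P=\mathcal P$ for all $g$ (reindexing), and it refines $\mathcal R$, so its atoms have $D$-diameter $<\varepsilon$. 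This $\mathcal P$ is the one required (diameters measured in $D$, but since $d\le D$ they are also $<\varepsilon$ in $d$, and in any case POTP is metric-independent).

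\textbf{Main obstacle.}
The crux is the \emph{finiteness} in the Partition Lemma. Equicontinuity alone easily yields the isometric metric $D$ and, via zero-dimensionality, clopen partitions into arbitrarily small pieces; what is not obvious and drives the whole argument is that $G$ permutes only \emph{finitely many} translates of such a partition. Passing to the compact envelope $H$ is what makes this work: the condition ``moves every point by less than $\rho$'' becomes an \emph{open} condition on $H$, hence contains a finite-index subgroup, which is exactly what is needed. (Zero-dimensionality of $X$ is used essentially here and cannot be dropped: for connected $X$ the lemma — and POTP — fails, e.g. for the flip action of $\Z$ on $[0,1]$.) Everything else is routine: that $D$ is compatible and isometric under equicontinuity, that $\mathrm{Iso}(X,D)$ with the uniform topology is a compact topological group, that a finite common refinement of clopen partitions is a finite clopen partition, and the word-length induction in the tracing step.
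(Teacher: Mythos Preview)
Your proof is correct and takes a genuinely different route from the paper's. The paper argues more concretely: it realizes $X$ as $\{0,1\}^G$ with a product-type metric in which ``$d(x,y)<2^{-m}$'' means ``$x$ and $y$ agree on the finite set $F_m=\{g_0,\dots,g_m\}$''. Equicontinuity then becomes the statement that there is $k$ with $x|_{F_k}=y|_{F_k}\Rightarrow (gx)|_{F_m}=(gy)|_{F_m}$ for all $g$, and a direct word-length induction shows that for any $2^{-k}$ pseudo-orbit $\{x^{(g)}\}$ one has $(gx^{(e)})|_{F_m}=x^{(g)}|_{F_m}$, so $x^{(e)}$ itself traces. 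No group-theoretic machinery is used.

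Your argument, by contrast, isolates a structural fact (the Partition Lemma: arbitrarily fine $G$-invariant clopen partitions exist) and proves it by passing to the compact Ellis envelope $H\subset\mathrm{Iso}(X,D)$ and using that open subgroups of compact groups have finite index. This is more conceptual and yields a reusable lemma about equicontinuous actions on zero-dimensional spaces; the paper's approach is shorter and entirely elementary but tied to a particular coordinate model. Both arguments ultimately reduce to the same word-length induction once the right ``coarse-graining'' is in hand---your $G$-invariant partition $\mathcal P$ plays the role that the cylinder partition on $F_m$ plays in the paper.
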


\begin{proof}
Since $\{0,1\}^G$ is a compact, totally disconnected, metrizable space without isolated points, we can assume that $X=\{0,1\}^G$. Let $A$ be a symmetric finitely generating set of $G$. We enumerate elements of $G$ by $e=g_0,g_1,\cdots$ and define a compatible metric $d$ on $X$ by $$d(x,y):=\sum_{i=1}^\infty\frac{1}{2^i}d_S(x_{g_i},y_{g_i})$$ for $x,y\in X$, where $d_S$ is the metric on $\{0,1\}$ defined by $d_S(a,b)$ is 1 if $a\neq b$ and 0 otherwise. Put $F_n:=\{g_0,g_1,\cdots, g_n\}$. Let $\varepsilon>0$. Choose $m\in\N$ such that $2^{-m}<\varepsilon$. Since the action $T$ is equicontinuous, there exists $k\in \N$ such that whenever $d(x,y)<2^{-k}$, one has $d(gx,gy)<2^{-m}$ for every $g\in G$. This means that for any $x,y\in X$ with $d(x,y)<2^{-k}$, we have $(gx)_{F_m}=(gy)_{F_m}$ for $g\in G$. Let $\{x^{(h)}\}_{h\in G}$ be a $2^{-k}$ pseudo-orbit for $T$ with respect to $A$, i.e. for $a\in A, h\in G$, we have $d(ax^{(h)},x^{(ah)})<2^{-k}$. Then $(gax^{(h)})_{F_m}=(gx^{(ah)})_{F_m}$ for every $a\in A$ and $h,g\in G$. For $g\in G$, we write $g$ as $a_1\cdots a_n$ for some $n\in\N$ and $a_i\in A$. Then we have
\begin{eqnarray*}
(gx^{(e)})_{F_m}&=&(a_1\cdots a_nx^{(e)})_{F_m}=(a_1\cdots a_{n-1}x^{(a_n)})_{F_m}\\
&=&(a_1\cdots a_{n-2}x^{(a_{n-1}a_n)})_{F_m}=\cdots=x^{(a_1\cdots a_n)}_{F_m}=x^{(g)}_{F_m}.
\end{eqnarray*}

Consequently we get $d(gx^{(e)},x^{(g)})<2^{-m}<\varepsilon$, and so complete the proof.
\end{proof}

\begin{cor}
Every distal action of a finitely generated group on the Cantor space has POTP.
\end{cor}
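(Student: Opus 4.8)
The final statement to prove is the corollary: every distal action of a finitely generated group on the Cantor space has POTP. The plan is to deduce this from Theorem~\ref{L-equicontinuity systems of Cantor spaces have POTP} by showing that a distal action of a finitely generated group on the Cantor space is in fact equicontinuous. The key input is a classical structure result of Ellis and Auslander: a distal action of a group on a compact metric space is equicontinuous provided the enveloping semigroup is small enough, and more to the point, on a zero-dimensional (totally disconnected) compact metric space a distal system is automatically equicontinuous because the Ellis semigroup acts by a group of homeomorphisms and the space admits a basis of clopen sets preserved in a uniform way.

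First I would recall that for any action $T$ of $G$ on a compact metric space $X$, distality is equivalent to the statement that the enveloping (Ellis) semigroup $E(X,G)$ is a group of (not necessarily continuous) bijections of $X$. Next I would invoke the fact that when $X$ is totally disconnected, this forces equicontinuity: the clopen partitions of $X$ form a neighborhood basis of the diagonal in $X \times X$, and since each element of $E(X,G)$ is a bijection permuting the finitely many atoms of any given clopen partition $\mathcal{P}$, the orbit of a point $x$ under $G$ stays, modulo $\mathcal{P}$, within a controlled set; uniformity over $x$ then yields equicontinuity. This is the Ellis--Keynes--Namioka-type argument specialized to the zero-dimensional case. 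Alternatively, and perhaps cleaner for the write-up, I would cite the known equivalence ``distal $\Leftrightarrow$ equicontinuous'' for group actions on zero-dimensional compact metric spaces directly.

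Once equicontinuity is in hand, the conclusion is immediate: the Cantor space is an infinite, zero-dimensional, metrizable space without isolated points, $G$ is infinite and finitely generated by hypothesis, so Theorem~\ref{L-equicontinuity systems of Cantor spaces have POTP} applies and gives that $T$ has POTP. I would therefore structure the proof as: (1) a one-line reduction noting $X$ is the Cantor space; (2) the claim that a distal action on a zero-dimensional compact metric space is equicontinuous, with the enveloping-semigroup justification or a citation; (3) application of the preceding theorem.

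The main obstacle is step (2): establishing (or citing precisely) that distality implies equicontinuity in the zero-dimensional setting. This is genuinely false without the zero-dimensionality hypothesis (irrational rotations on the torus are distal but the point is moot there; the real counterexamples are skew products like the one on $\mathbb{T}^2$), so the proof must use total disconnectedness essentially. If a self-contained argument is preferred over a citation, the care needed is in the uniformity: one must show that for a fixed clopen partition $\mathcal{P}$, there is a finer clopen partition $\mathcal{Q}$ such that $x,y$ in the same atom of $\mathcal{Q}$ implies $gx,gy$ in the same atom of $\mathcal{P}$ for all $g$ simultaneously — this is where distality (via the enveloping semigroup being a group, hence acting ``injectively enough'' on partitions) is used, and it is the step I would spend the most words on.
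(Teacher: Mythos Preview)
Your proposal is correct and follows exactly the paper's approach: show that a distal action on the Cantor space is equicontinuous, then invoke Theorem~\ref{L-equicontinuity systems of Cantor spaces have POTP}. The paper's proof is a one-line citation to \cite[Corollary~1.9]{AGW} for the implication distal $\Rightarrow$ equicontinuous on zero-dimensional compact metric spaces, which is precisely the citation option you describe in step~(2).
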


\begin{proof}
From \cite[Corollary 1.9]{AGW}, we know that every distal action of a finitely generated group on the Cantor space is equicontinuous. 
\end{proof}

\begin{remark}
We can not drop out the assumption of Cantor space in Theorem \ref{L-equicontinuity systems of Cantor spaces have POTP} because even in the case $G=\Z$, every distal homeomorphism on a connected space does not have POTP \cite[Theorem 2.3.2]{AH}. 
\end{remark}

Let $G$ be a countable group. A \textit{chain} of $G$ is a sequence of subgroups $\{G_n\}_{n\geq 0}$ such that $G=G_0\geq G_1\geq G_2\geq \cdots$ such that $[G:G_n]<\infty$ for every $n\geq 0$. For a chain $\{G_n\}_{n\geq 0}$, we define tree structure $T (G, \{G_n\})$ as follows. The vertices are $\{g G_n: n\geq 0, g\in G\}$ and $(g_1 G_n, g_2 G_m)$ is an edge if $m= n+ 1$ and $g_2 G_m\subset g_1 G_n$. The boundary $\overleftarrow{G}_{\{G_n\}}$ of $T (G, \{G_n\})$ consists of all sequences $(x_0, x_1, \cdots)$ of vertices with $x_n$ adjacent to $x_{n+ 1}$ for each $n\in \mathbb{Z}_+$. Then $\overleftarrow{G}_{\{G_n\}}$ is a compact metrizable space endowed with the topology generated by the open basis consisting of all subsets $O_x= \{(x_0, x_1, \cdots)\in \overleftarrow{G}_{\{G_n\}}: x_N= x\}$ with $x\in G/G_N$ and $N\in \mathbb{Z}_+$. The natural left actions of $G$ on $G/G_n$ induce the \emph{profinite action} $(\overleftarrow{G}_{\{G_n\}}, G)$, an action of $G$ on $\overleftarrow{G}_{\{G_n\}}$ by homeomorphisms.
 In this case the profinite action $(\overleftarrow{G}_{\{G_n\}}, G)$ is equicontinuous, since for any $x= g G_n\in G/G_n$ and $h\in G$ we have $h O_x= O_y$ with $y= h g G_n\in G/G_n$.
 
 Profinite actions have been studied extensively in rank gradient, orbit equivalence, operator algebras, and sofic entropy theory \cite{AN,chungzhang,Ioana}.
 Since every profinite action of $G$ on $\overleftarrow{G}_{\{G_n\}}$ is equicontinuous and every infinite boundary space $\overleftarrow{G}_{\{G_n\}}$ is a Cantor space, from Theorem \ref{L-equicontinuity systems of Cantor spaces have POTP} we get the following corollary.
 \begin{cor}
Every profinite action of $G$ on an infinite boundary space $\overleftarrow{G}_{\{G_n\}}$ has POTP.
 \end{cor}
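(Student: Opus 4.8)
The plan is to reduce the statement to the already-proved Theorem~\ref{L-equicontinuity systems of Cantor spaces have POTP}. That theorem says that any equicontinuous action of an infinite, finitely generated group on a Cantor space has POTP, so the corollary will follow once I verify two things about the profinite action $(\overleftarrow{G}_{\{G_n\}},G)$: that it is equicontinuous, and that the space $\overleftarrow{G}_{\{G_n\}}$ is a Cantor space whenever it is infinite.

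First I would record that the profinite action is equicontinuous. This is essentially stated already in the excerpt: for a basic clopen set $O_x$ with $x=gG_n$ and any $h\in G$ we have $hO_x=O_y$ with $y=hgG_n\in G/G_n$, so $h$ permutes the clopen partition $\{O_x : x\in G/G_n\}$ of $\overleftarrow{G}_{\{G_n\}}$ for each fixed $n$. Since the metric on $\overleftarrow{G}_{\{G_n\}}$ can be taken so that two boundary points at distance less than $2^{-n}$ agree in their first $n$ coordinates, equicontinuity is immediate: given $\varepsilon>0$, pick $n$ with $2^{-n}<\varepsilon$; if $d(p,q)$ is small enough that $p,q$ lie in the same $O_x$ with $x\in G/G_n$, then $hp,hq$ lie in the same $O_y$, hence $d(hp,hq)<\varepsilon$ for all $h\in G$.

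Next I would check that an infinite boundary space $\overleftarrow{G}_{\{G_n\}}$ is a Cantor space, i.e. compact, metrizable, totally disconnected, and without isolated points. Compactness and metrizability are already noted in the excerpt (it is the inverse limit of the finite sets $G/G_n$). Total disconnectedness is clear because the basic sets $O_x$ are clopen and separate points. The one point needing an argument is the absence of isolated points: a point $p=(x_0,x_1,\dots)$ is isolated iff some $O_{x_N}=\{p\}$, which forces $|G/G_m|$ to stabilize for $m\geq N$; since each $[G:G_n]$ is finite and the $G_n$ decrease, $\overleftarrow{G}_{\{G_n\}}$ is finite in that case, contradicting infiniteness. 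So an infinite boundary space has no isolated points, hence is a Cantor space. (Here one should be slightly careful: infiniteness of $\overleftarrow{G}_{\{G_n\}}$ is equivalent to $[G:G_n]\to\infty$, and then every $O_x$ with $x\in G/G_N$ further subdivides nontrivially at some later level, which is exactly what "no isolated points" requires.)

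With these two verifications in hand, the action $(\overleftarrow{G}_{\{G_n\}},G)$ is an equicontinuous action of the infinite finitely generated group $G$ on a Cantor space, so Theorem~\ref{L-equicontinuity systems of Cantor spaces have POTP} applies directly and gives POTP. The main (and really only) obstacle is the bookkeeping in the no-isolated-points argument — making precise that "infinite boundary $\Rightarrow$ indices tend to infinity $\Rightarrow$ every basic clopen set splits further" — but this is routine once phrased in terms of the inverse system $G/G_0\leftarrow G/G_1\leftarrow\cdots$.
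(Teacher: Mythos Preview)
Your proposal is correct and follows exactly the route the paper takes: the corollary is deduced immediately from Theorem~\ref{L-equicontinuity systems of Cantor spaces have POTP} together with the observations (stated just before the corollary) that profinite actions are equicontinuous and that an infinite boundary space $\overleftarrow{G}_{\{G_n\}}$ is a Cantor space. You supply more detail than the paper does---in particular the no-isolated-points argument via the homogeneity of the fibers $G/G_{m+1}\to G/G_m$ and the remark that infiniteness of the boundary forces $G$ itself to be infinite---but the strategy is identical.
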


\begin{bibdiv}
\begin{biblist}

  \bib{AH}{book}{
   author={Aoki, Nobuo},
   author={Hiraide, Koichi},
   title={Topological theory of dynamical systems},
   series={North-Holland Mathematical Library},
   volume={52},
   note={Recent advances},
   publisher={North-Holland Publishing Co., Amsterdam},
   date={1994},
 
}

 \bib{AN}{article}{
   author={Ab{\'e}rt, Mikl{\'o}s},
   author={Nikolov, Nikolay},
   title={Rank gradient, cost of groups and the rank versus Heegaard genus
   problem},
   journal={J. Eur. Math. Soc. (JEMS)},
   volume={14},
   date={2012},
   number={5},
   pages={1657--1677},
 }

 \bib{AGW}{article}{
   author={Auslander, Joseph},
   author={Glasner, Eli},
   author={Weiss, Benjamin},
   title={On recurrence in zero dimensional flows},
   journal={Forum Math.},
   volume={19},
   date={2007},
   pages={107--114},
   
}

\bib{CC2010}{book}{
   author={Ceccherini-Silberstein, Tullio},
   author={Coornaert, Michel},
   title={Cellular automata and groups},
   series={Springer Monographs in Mathematics},
   publisher={Springer-Verlag, Berlin},
   date={2010},
  
}

\bib{chungzhang}{article}{
   author={Chung, Nhan-Phu},
   author={Zhang, Guohua},
   title={Weak expansiveness for actions of sofic groups},
   journal={J. Funct. Anal.},
   volume={268},
   date={2015},
   number={11},
   pages={3534--3565},
  }
  
   \bib{HSW}{article}{
   author={Huyi,  Hu},
   author={Enhui, Shi},
   author={Wang, Zhenqi Jenny}
   title={Some ergodic and rigidity properties of discrete Heisenberg group actions},
   status={preprint, 	arXiv:1405.1120},
   }

 \bib{Ioana}{article}{
   author={Ioana, Adrian},
   title={Cocycle superrigidity for profinite actions of property (T)
   groups},
   journal={Duke Math. J.},
   volume={157},
   date={2011},
   number={2},
   pages={337--367},
   
}

 \bib{LM}{article}{
   author={Lee, Keonhee},
   author={C.A. Morales},
   title={Topological stability and pseudo-orbit tracing property for expansive measures},
   journal={to appear in J. Differential Equations},
  note={http://dx.doi.org/10.1016/j/jde/2016.04.029},,
   
}

\bib{N}{article}{
   author={Z. Nitecki},
   title={On semi-stability for diffeomorphisms},
   journal={Invent. Math.},
   volume={14},
   date={1971},
   pages={83--122},
   
}

 \bib{Oprocha2008}{article}{
   author={Oprocha, Piotr},
   title={Shadowing in multi-dimensional shift spaces},
   journal={Colloq. Math.},
   volume={110},
   date={2008},
   pages={451--460},
   
}

\bib{OT}{article}{
   author={Osipov, Alexey V.},
   author={Tikhomirov, Sergey B.},
   title={Shadowing for actions of some finitely generated groups},
   journal={Dyn. Syst.},
   volume={29},
   date={2014},
   pages={337--351},
  
}
\bib{Pilyugin}{article}{
   author={Pilyugin, Sergei Yu.},
   title={Inverse shadowing in group actions},
   journal={to appear in Dyn. Syst.},
   note={http://dx.doi.org/10.1080/14689367.2016.1173651},
  
}

\bib{MR2028929}{article}{
   author={Pilyugin, Sergei Yu.},
   author={Tikhomirov, Sergei B.},
   title={Shadowing in actions of some abelian groups},
   journal={Fund. Math.},
   volume={179},
   date={2003},
   pages={83--96},
  
}

\bib{Walters70}{article}{
   author={Walters, Peter},
   title={Anosov diffeomorphisms are topologically stable},
   journal={Topology},
   volume={9},
   date={1970},
   pages={71--78},
  
}
		
  \bib{Walters78}{article}{
   author={Walters, Peter},
   title={On the pseudo-orbit tracing property and its relationship to stability},
   book={
      series={Lecture Notes in Math.}
      volume={\bf{668}},
      publisher={Springer, Berlin},
   },
   date={1978},
   pages={231--244},
  
}

\end{biblist}
\end{bibdiv}

\end{document}